\documentclass{amsart}[12pt]
\usepackage{amssymb,amsmath}
\usepackage{enumerate}
\usepackage[english]{babel}
\usepackage{color}
\usepackage{comment}

\newtheorem{teo}{Theorem}[section]
\newtheorem{pro}{Proposition}[section]
\newtheorem{cor}{Corollary}[section]

\newtheorem{lm}{Lemma}[section]

\theoremstyle{definition}
\newtheorem{rem}{Remark}[section]

\newtheorem{ex}{Example}[section]


\title[Noncommutative modulated ergodic theorems]{A noncommutative weak type maximal inequality for modulated ergodic averages with general weights}
\keywords{Semifinite von Neumann algebra, noncommutative weighted individual ergodic theorems, multiparameter individual ergodic theorems, Besicovitch sequences, Hartman sequences}
\subjclass[2020]{47A35, 46L51}

\author{Morgan O'Brien}
\address{North Dakota State University\\ Department of Mathematics\\ 1210 Albrecht Boulevard, Minard Hall \\ Fargo, ND 58102, USA}
\email{morgan.obrien@ndsu.edu, obrienmorganc@gmail.com}

\begin{document}
\begin{abstract}
In this article, we prove a weak type $(p,p)$ maximal inequality, $1<p<\infty$, for weighted averages of a positive Dunford-Schwarz operator $T$ acting on a noncommutative $L_p$-space associated to a semifinite von Neumann algebra $\mathcal{M}$, with weights in $W_q$, where $\frac{1}{p}+\frac{1}{q}=1$. 
This result is then utilized to obtain modulated individual ergodic theorems with $q$-Besicovitch and $q$-Hartman sequences as weights. Multiparameter versions of these results are also investigated.
\end{abstract}
\date{August 8, 2023}
\maketitle

\section{Introduction}\label{s1}

Since Ryll-Nardzewski proved that bounded Besicovitch sequences are good weights for the individual ergodic theorem for Dunford-Schwartz operators \cite{rn}, the study of modulated ergodic theorems has been an active area of research in ergodic theory. For example, such results have been studied by Bellow and Losert for certain bounded Hartman sequences with correlation \cite{bl} (this class contains all bounded Besicovitch sequences), $q$-Besicovitch sequences by Lin, Olsen, and Tempelman \cite{lot}, and certain sequences arising from arithmetic functions in number theory by El Abdalaoui, Kulaga-Przymus, Lema\'{n}czyk, and de la Rue \cite{eakpldlr} and Cuny and Weber \cite{cw}, just to name a few. Also, \c{C}\"{o}mez and Litvinov generalized bounded Besicovitch sequences to the setting of unimodular functions and obtained individual ergodic theorems along such sequences; furthermore, they also generalized these results to the case of certain super-additive processes \cite{comli2}. Multiparameter versions of some of these results are known as well; for example, Jones and Olsen \cite{jo} showed that multi-parameter $q$-Besicovitch sequences satisfy these types of results when the operators considered are positive contractions of a fixed $L_p$-space.

Generalizing results of this nature from the classical measure space setting to the von Neumann algebra setting is an active area of research in noncommutative ergodic theory. 
For example, Chilin, Litvinov, and Skalski \cite{cls} showed that bounded Besicovitch sequences are good weights for the noncommutative individual ergodic theorem (see also \cite{cl1,cl2}). \c{C}\"{o}mez and Litvinov have also proved similar results for operator-valued Besicovitch sequences (see \cite{comli3}). Related to these, Litvinov \cite{li2} showed that a noncommutative version of the Wiener-Wintner ergodic theorem holds for ergodic $\tau$-preserving $*$-homomorphisms of a finite von Neumann algebra with weights being trigonometric polynomials. Hong and Sun \cite{hs} showed that the weights considered by Bellow and Losert also satisfy a Wiener-Wintner type result in a multiparameter form for certain $\tau$-preserving $*$-automorphisms of finite von Neumann algebras. In \cite{ob2}, under some strong assumptions on the positive Dunford-Schwartz operator under consideration (frequently satisfied for operators satisfying certain functional analytic properties, like being self-adjoint on $L_2$), the author was able to prove that all bounded Hartman sequences satisfy a Wiener-Wintner type result.

In the noncommutative setting, the maximal inequalities used only allow bounded weights to be used, even though stronger results are true in the commutative setting with unbounded weights (see \cite{lot,jo}). However, the assumptions made on the operators in \cite{ob2} actually allow unbounded weights to be used in some special cases in the von Neumann algebra setting. The methods used there are somewhat specialized, however, and do not lead to an immediate path to generalization for all positive Dunford-Schwartz operators.

In this article, we will prove a maximal inequality suitable to proving that weights in $W_q$ (see Section \ref{s22} for a definition) may be used in modulated ergodic theorems for any positive Dunford-Schwartz operators acting on von Neumann algebras (see Theorem \ref{t31}). The version we prove actually applies in a multiparameter setting with some extra restrictions that are not needed in the commutative setting (see Remark \ref{r31}). However, even in the single parameter case our results seem to be new in the von Neumann algebra setting. Afterwards, we use this result to prove some extensions of some of the results in \cite{cl1,cls,li2,ob2} to incorporate these more general class of weights. In particular, in Theorem \ref{t32} we extend the results of \cite{cl1,cl2} to allow $q$-Besicovitch sequences as weights, in Theorem \ref{t33} we expand on the special cases of \cite{ob2} to allow $q$-Hartman sequences in other situations, and in Remark \ref{r32} we discuss how to extend the Wiener-Wintner result of \cite{li2} to allow $q$-Besicovitch sequences.

\section{Preliminaries and Notation}\label{s2}

We will let $\mathbb{N}$ denote the set of natural numbers and $\mathbb{N}_0=\mathbb{N}\cup\{0\}$. Write $\mathbb{T}:=\{z\in\mathbb{C}:|z|=1\}$ to denote the unit circle. Throughout this section, let $d\in\mathbb{N}$ be fixed. Also, let $\textbf{0}:=(0,...,0)_{\textbf{n}\in\mathbb{N}_0^d}\subset\mathbb{C}^d$ denote the constant $0$ sequence.

Throughout this paper we will adopt the convention of
$$\frac{0}{0}:=0.$$ 
This convention is needed in Theorem \ref{t31}, where for $\epsilon>0$ we wish to obtain a quantity of the form $\sup_{\alpha}\frac{1}{\alpha}M_\alpha\leq\epsilon$, where $\alpha$ may be $0$. Hence, one does not need to do a separate case for $\alpha=0$; $M_\alpha$ is the quantity we are mostly concerned with, and it will be the case that $M_\alpha=0$ when $\alpha=0$, so that $\frac{1}{\alpha}M_\alpha=\frac{0}{0}=0<\epsilon$ with this convention.


Given a sequence $(\textbf{n}_k)_{k=0}^{\infty}\subseteq\mathbb{N}_0^d$, we will say that $\textbf{n}_k\to\infty$ if $n_j(k)\to\infty$ for every $j\in\{1,...,d\}$, where $\textbf{n}_k=(n_1(k),...,n_d(k))$ for every $k\in\mathbb{N}_0$.

Let $(x_{\textbf{n}})_{\textbf{n}\in\mathbb{N}_0^d}$ be a sequence in a Banach space $\mathcal{X}$. Then we will say that $x_{\textbf{n}}\to x$ as $\textbf{n}\to\infty$ if for every $\epsilon>0$ there exists $N\in\mathbb{N}_0$ such that, for  $\textbf{n}=(n_1,...,n_d)\in\mathbb{N}_0^d$, 
$$\min\{n_1,...,n_d\}\geq N \ \text{ implies } \ \|x_{\textbf{n}}-x\|_{\mathcal{X}}<\epsilon.$$

Similarly, if $(\alpha_{\textbf{n}})_{\textbf{n}\in\mathbb{N}_0^d}\subset\mathbb{R}$, then we will write
$$\limsup_{\textbf{n}\to\infty}\alpha_{\textbf{n}}=\lim_{N\to\infty}\sup_{\min\{n_1,...,n_d\}\geq N}\alpha_{n_1,...,n_d}.$$

If $\textbf{n}=(n_1,...,n_d)\in\mathbb{N}^d$, then we will use the notation 
$$\frac{1}{|\textbf{n}|}\sum_{\textbf{k}=0}^{\textbf{n}-1}:=\frac{1}{n_1\cdots n_d}\sum_{k_1=0}^{n_1-1}\cdots\sum_{k_d=0}^{n_d-1}.$$

Given $1\leq C<\infty$, let $$\textbf{N}_C^{(d)}=\left\{(n_1,...,n_d)\in\mathbb{N}^d:\frac{n_i}{n_j}\leq C\text{ for every }1\leq i,j\leq d\text{ with }n_j\neq0\right\}.$$ A sequence $(\textbf{n}_k)_{k=0}^{\infty}\subseteq\mathbb{N}^d$ \textit{remains in a sector of $\mathbb{N}^d$} if there exists $1\leq C<\infty$ such that $\textbf{n}_{k}\in \textbf{N}_C^{(d)}$ for every $k\in\mathbb{N}_0$. When $d=1$, without loss of generality we may assume that $C=1$ and $\textbf{N}_1^{(1)}=\mathbb{N}$.

\subsection{Noncommutative $L_p$-spaces}

If $\mathcal{M}$ is a von Neumann algebra and $\tau$ is a normal semifinite faithful trace on $\mathcal{M}$, then we will call the pair $(\mathcal{M},\tau)$ a semifinite von Neumann algebra. Let $\textbf{1}$ denote the identity operator of $\mathcal{M}$. Let $\mathcal{P}(\mathcal{M})$ denote the set of projections in $\mathcal{M}$, and for each $e\in\mathcal{P}(\mathcal{M})$ write $e^\perp:=\textbf{1}-e$.

Suppose that $\mathcal{M}$ acts on the Hilbert space $\mathcal{H}$. Let $x:\mathcal{D}_x\to\mathcal{H}$ be a closed densely defined operator on $\mathcal{H}$. Then $x$ is \textit{affiliated} to $\mathcal{M}$ if $yx\subseteq xy$ for every $y\in\mathcal{M}'$ ($\mathcal{M}'\subseteq\mathcal{B}(\mathcal{H})$ being the commutant of $\mathcal{M}$). If $x$ is affiliated to $\mathcal{M}$, then it is called \textit{$\tau$-measurable} if for every $\epsilon>0$, there exists $e\in\mathcal{P}(\mathcal{M})$ such that $\tau(e^\perp)\leq\epsilon$ and $xe\in\mathcal{M}$. Let $L_0(\mathcal{M},\tau)$ denote the set of all $\tau$-measurable operators affiliated with $\mathcal{M}$. For every $\epsilon,\delta>0$, define the set
$$V(\epsilon,\delta)=\{x\in L_0(\mathcal{M},\tau):\|xe\|_\infty\leq\delta\text{ for some }e\in\mathcal{P}(\mathcal{M})\text{ with }\tau(e^\perp)\leq\epsilon\}.$$
The collection $\{V(\epsilon,\delta)\}_{\epsilon,\delta>0}$ forms a set of neighborhoods of $0$ in $L_0(\mathcal{M},\tau)$, which will give rise to the \textit{measure topology} of $L_0(\mathcal{M},\tau)$. Equipped with the closed sum, closed product, and measure topology, $L_0(\mathcal{M},\tau)$ is a complete topological $*$-algebra. See \cite{ne} for more information on this.

If $x\in L_0(\mathcal{M},\tau)$, then $x$ is \textit{positive}, written $x\geq0$, if $\langle x\xi,\xi\rangle\geq0$ for every $\xi\in\mathcal{D}_x$. Write $x\leq y$ if $y-x\geq0$, where $x,y\in L_0(\mathcal{M},\tau)$ and $x,y\geq0$. If $E\subseteq L_0(\mathcal{M},\tau)$, write $E^+=\{x\in E:x\geq0\}$. A linear operator $S:E\to E$ is called \textit{positive} if $S(x)\geq0$ for every $x\geq0$ (i.e. $S(E^+)\subseteq E^+$).

Given $x\in L_0(\mathcal{M},\tau)^+$, by the spectral theorem one may write $x$ in its spectral decomposition as $x=\int_{[0,\infty)}\lambda de_\lambda$. From this, one may extend the trace from $\mathcal{M}^+$ to $L_0(\mathcal{M},\tau)^+$ via
$$\tau(x)=\sup_{n}\tau\left(\int_{[0,n]}\lambda de_\lambda\right).$$

If $x\in L_0(\mathcal{M},\tau)$, then there exists $u\in\mathcal{M}$ and $|x|\in L_0(\mathcal{M},\tau)^+$ such that $x$ has a polar decomposition $x=u|x|$, where $|x|^2=x^*x$. For each $1\leq p<\infty$, let $\|x\|_p:=\tau(|x|^p)^{1/p}$ for $x\in L_0(\mathcal{M},\tau)$, and let the noncommutative $L_p$-space associated to $\mathcal{M}$ be defined by
$$L_p(\mathcal{M},\tau)=\{x\in L_0(\mathcal{M},\tau):\|x\|_p<\infty\}.$$ Write $L_\infty(\mathcal{M},\tau)=\mathcal{M}$, and equip it with the usual operator norm $\|\cdot\|_\infty$. Then $L_p(\mathcal{M},\tau)$ is a Banach space with respect to $\|\cdot\|_p$ for every $1\leq p\leq\infty$. See \cite{ne} for more on this. If $p=0$ or $1\leq p\leq\infty$, we may write $L_p=L_p(\mathcal{M},\tau)$ when convenient and unambiguous. It is known that $L_p\subset L_1+\mathcal{M}$ for every $1\leq p\leq\infty$.


A linear operator $T:L_1+\mathcal{M}\to L_1+\mathcal{M}$ is a \textit{Dunford-Schwartz operator} if
$$\|T(x)\|_p\leq\|x\|_p\text{ for every }x\in L_p\text{ and }1\leq p\leq\infty.$$
Let $DS^+(\mathcal{M},\tau)$ denote the set of all positive Dunford-Schwartz operators on $(\mathcal{M},\tau)$. Given $\textbf{T}=(T_1,...,T_d)\in DS^+(\mathcal{M},\tau)^d$ and $\textbf{k}=(k_1,...,k_d)\in\mathbb{N}_0^d$, we will write $\textbf{T}^{\textbf{k}}=T_1^{k_1}\cdots T_d^{k_d}$.

There are a few properties of the ordering on $\mathcal{M}^+$ that are vital to our arguments. If $f:[0,\infty)\to[0,\infty)$ is an operator convex function (i.e. $f$ is still convex if we replace $[0,\infty)$ with $\mathcal{M}^+$ using functional calculus), then for every positive operator $S:\mathcal{M}\to\mathcal{M}$ it follows that $f(S(x))\leq S(f(x))$ for every $x\in\mathcal{M}^+$. Important to us is that $f(t)=t^p$ is operator convex when $1\leq p\leq2$; Kadison's inequality is the case $p=2$. In a similar vein to this, the function $f$ is said to be operator monotone if $x,y\in\mathcal{M}^+$ satisfying $x\leq y$ implies $f(x)\leq f(y)$. The function $f(t)=t^{\frac{1}{p}}$ is operator monotone for every $1\leq p<\infty$. For more on this, see \cite{da,jy}.

The map $0\leq t\mapsto t^p$ cannot necessarily be improved for operator convexity for $p>2$. For example, the case $p=3$ fails when considering the matrices
$$x=\left[\begin{array}{cc}1&1\\1&1\end{array}\right] \, \text{ and } \, y=\left[\begin{array}{cc}3&1\\1&1\end{array}\right].$$

Using the notion of operator convexity, the following operator H\"{o}lder's inequality was shown in \cite[Lemma 2.4]{hlrx}. There, the result was proven for arbitrary measure spaces. However, we will only need it for averages of a finite number of terms, so we state it in that language.

\begin{lm}(Cf. \cite[Lemma 2.4]{hlrx})\label{l21}
Let $(\mathcal{M},\tau)$ be a semifinite von Neumann algebra and assume $1<p,q<\infty$ satisfy $\frac{1}{p}+\frac{1}{q}=1$. Fix $n\in\mathbb{N}$, and let $\alpha_0,...,\alpha_{n-1}\in[0,\infty)$ and $x_0,...,x_{n-1}\in(L_1(\mathcal{M},\tau)+\mathcal{M})^+$. Then
$$\frac{1}{n}\sum_{k=0}^{n-1}\alpha_kx_k
\leq\left(\frac{1}{n}\sum_{k=0}^{n-1}\alpha_k^q\right)^{\frac{1}{q}}\left(\frac{1}{n}\sum_{k=0}^{n-1}x_k^p\right)^{\frac{1}{p}}.$$
\end{lm}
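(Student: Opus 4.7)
The plan is to apply the operator Jensen inequality to the operator concave function $f(t)=t^{1/p}$. Although the paper emphasizes only the operator monotonicity of $t\mapsto t^{1/p}$, the L\"{o}wner-Heinz theorem provides the stronger fact that $t\mapsto t^{r}$ is both operator monotone and operator concave on $[0,\infty)$ for every $r\in(0,1]$. Specializing to $r=1/p\in(0,1]$ covers the full range $1<p<\infty$ of the lemma in one stroke, and neatly sidesteps the failure of operator convexity of $t\mapsto t^p$ when $p>2$.

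Assume without loss of generality that $A:=\frac{1}{n}\sum_{k=0}^{n-1}\alpha_k^q>0$; otherwise every $\alpha_k$ vanishes and the inequality is trivial. Let $K=\{k:\alpha_k>0\}$, and introduce the probability weights $\mu_k:=\alpha_k^q/(nA)$ together with the auxiliary positive operators $z_k:=\alpha_k^{1-q}x_k$ for $k\in K$. The algebraic identities $q-1=q/p$ and $q+p(1-q)=0$ (both equivalent to $1/p+1/q=1$) give
\[
\sum_{k\in K}\mu_k z_k \;=\; \frac{1}{nA}\sum_{k=0}^{n-1}\alpha_k x_k
\qquad\text{and}\qquad
\sum_{k\in K}\mu_k z_k^p \;=\; \frac{1}{nA}\sum_{k\in K}x_k^p.
\]

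By operator concavity of $t^{1/p}$, for any positive $u_k$ and scalar weights $\mu_k\geq 0$ with $\sum_k\mu_k=1$ one has $\bigl(\sum_k\mu_k u_k\bigr)^{1/p}\geq\sum_k\mu_k u_k^{1/p}$. Applying this with $u_k=z_k^p$, substituting the identities above, and invoking operator monotonicity of $t^{1/p}$ to enlarge the sum $\sum_{k\in K}x_k^p$ to $\sum_{k=0}^{n-1}x_k^p$, I would obtain
\[
\frac{1}{nA}\sum_{k=0}^{n-1}\alpha_k x_k \;\leq\; \Bigl(\frac{1}{nA}\sum_{k=0}^{n-1}x_k^p\Bigr)^{1/p}.
\]
Multiplying through by $A$ and using $A\cdot(nA)^{-1/p}=A^{1/q}\,n^{-1/p}$ rearranges this to the claimed bound.

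The main obstacle is finding the right change of variables. The naive scalar Young inequality $\alpha_k x_k\leq\frac{1}{q}\alpha_k^q\mathbf{1}+\frac{1}{p}x_k^p$ produces a scalar multiple of $\mathbf{1}$ on the right and therefore cannot yield a factor of $(\frac{1}{n}\sum x_k^p)^{1/p}$; and because $\alpha_k$ are scalars while the $x_k$ are genuinely noncommuting operators, the fully symmetric commutative treatment is unavailable. The weights $\mu_k=\alpha_k^q/(nA)$ are chosen precisely so that $\sum\mu_k z_k$ reproduces the weighted average of the $x_k$ and $\sum\mu_k z_k^p$ collapses into an unweighted average of the $x_k^p$; once this is arranged, operator concavity of $t^{1/p}$ does the rest, uniformly in $p$.
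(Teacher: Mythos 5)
Your proof is correct, and it is worth noting that the paper does not actually prove Lemma \ref{l21} at all: it is imported verbatim from \cite[Lemma 2.4]{hlrx}, so your argument supplies a genuine self-contained derivation rather than an alternative to one given in the text. The mechanism is sound: the weights $\mu_k=\alpha_k^q/(nA)$ sum to $1$, the exponent identities $q+p(1-q)=0$ and $(z_k^p)^{1/p}=z_k$ make $\sum_K\mu_kz_k$ and $\sum_K\mu_kz_k^p$ collapse to $\frac{1}{nA}\sum\alpha_kx_k$ and $\frac{1}{nA}\sum_Kx_k^p$ respectively, and the finite operator Jensen inequality for the operator concave function $t\mapsto t^{1/p}$ (every operator monotone function on $[0,\infty)$, in particular $t^r$ for $0<r\le1$ by L\"{o}wner--Heinz, is operator concave) gives exactly the inequality you need; enlarging $\sum_Kx_k^p$ to $\sum_{k=0}^{n-1}x_k^p$ via operator monotonicity and rescaling by $A=A^{1/q}A^{1/p}$ finishes it. This is attractive because it treats all $1<p<\infty$ uniformly, whereas the paper's own Lemma \ref{l31} has to work hard (an induction on dyadic blocks $(2^m,2^{m+1}]$) precisely because it only has operator convexity of $t\mapsto t^p$ for $p\le2$ at its disposal. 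The one point you should make explicit is that the lemma is stated for $x_k\in(L_1(\mathcal{M},\tau)+\mathcal{M})^+$, i.e.\ possibly unbounded $\tau$-measurable operators, while operator concavity and the Jensen inequality are usually stated on $\mathcal{M}^+$; the extension to the (extended) positive part of $L_0(\mathcal{M},\tau)$ is standard and available in the literature (e.g.\ the reference \cite{dtv} already in the bibliography, or an approximation by $x_ke_{[0,n]}(x_k)$), but as written your proof silently assumes it.
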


\subsection{$W_q^{(d)}$-spaces and weights}\label{s22}
If $1\leq q<\infty$ and $\alpha=(\alpha_\textbf{n})_{\textbf{n}\in\mathbb{N}_0^d}\subset\mathbb{C}$, then 
$$\|\alpha\|_{W_q^{(d)}}:=\left(\limsup_{\textbf{n}\to\infty}\frac{1}{|\textbf{n}|}\sum_{\textbf{k}=0}^{\textbf{n}-1}|\alpha_{\textbf{k}}|^q\right)^{\frac{1}{q}}$$ will denote the $W_q^{(d)}$-seminorm of $\alpha$. Given $C\geq1$, we will write
$$|\alpha|_{W_{q,C}^{(d)}}:=\left(\sup_{\textbf{n}\in \textbf{N}_C^{(d)}}\frac{1}{|\textbf{n}|}\sum_{\textbf{k}=0}^{\textbf{n}-1}|\alpha_{\textbf{k}}|^q\right)^{\frac{1}{q}}.$$
\begin{lm}
With the notation above, if $d=1$, then $\|\alpha\|_{W_q^{(1)}}<\infty$ if and only if  $|\alpha|_{W_{q,1}^{(1)}}<\infty$. If $d\geq2$, then $\|\alpha\|_{W_q^{(d)}}<\infty$ implies $|\alpha|_{W_{q,C}^{(d)}}<\infty$ for every $C\geq1$.
\end{lm}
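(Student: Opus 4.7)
The plan is to observe that only one direction of the lemma requires real work: the forward implication $\|\alpha\|_{W_q^{(d)}}<\infty \Rightarrow |\alpha|_{W_{q,C}^{(d)}}<\infty$. The reverse implication (which appears only in the $d=1$, $C=1$ part of the statement) is immediate because $\textbf{N}_1^{(1)}=\mathbb{N}$ and the limsup of any real sequence is bounded above by its supremum, so $\|\alpha\|_{W_q^{(1)}}\leq|\alpha|_{W_{q,1}^{(1)}}$.

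For the forward direction (in both the $d=1$ and $d\geq 2$ settings) I would abbreviate $a_{\textbf{n}}:=\frac{1}{|\textbf{n}|}\sum_{\textbf{k}=\textbf{0}}^{\textbf{n}-\textbf{1}}|\alpha_{\textbf{k}}|^q$ and set $L:=\|\alpha\|_{W_q^{(d)}}^q<\infty$. Unpacking the definition of $\limsup_{\textbf{n}\to\infty}$ as given in Section \ref{s2} furnishes some $N\in\mathbb{N}$ for which $a_{\textbf{n}}\leq L+1$ whenever $\min_i n_i\geq N$; this already controls every $\textbf{n}\in\textbf{N}_C^{(d)}$ whose coordinates all sit past $N$.

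The key remaining step is a finite-residual-set argument that exploits the sector condition. If $\textbf{n}\in\textbf{N}_C^{(d)}$ satisfies $\min_i n_i<N$, then the sector inequality $n_j\leq C\min_i n_i$ forces $n_j<CN$ for every coordinate, so $\textbf{n}$ lies in the finite box $\{1,\ldots,\lceil CN\rceil\}^d$. Over this finite set $a_{\textbf{n}}$ takes only finitely many finite nonnegative values, with some maximum $M<\infty$. Combining the two regimes gives $|\alpha|_{W_{q,C}^{(d)}}^q\leq\max\{L+1,M\}<\infty$, as required.

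The only genuinely delicate point, and the reason the implication cannot be upgraded to an ``iff'' when $d\geq 2$, is the sector hypothesis itself: without it, the ``residual'' part would be the unbounded set $\{\textbf{n}:\min_i n_i<N\}$, and one could have $a_{\textbf{n}}$ blowing up along rays like $\textbf{n}=(1,n_2,\ldots,n_d)$ as $n_2,\ldots,n_d\to\infty$ while the true limsup (over tuples with every coordinate large) remains finite. This is also why the statement places the converse only in the $d=1$ case, where the sector $\textbf{N}_1^{(1)}$ is all of $\mathbb{N}$ and no such pathology is possible.
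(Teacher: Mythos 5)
Your proof is correct and follows essentially the same route as the paper's: choose $N$ from the definition of the $\limsup$, and observe that the sector condition confines the residual set $\{\textbf{n}\in\textbf{N}_C^{(d)}:\min_i n_i<N\}$ to a finite box, over which the supremum is trivially finite. The only addition is your explicit treatment of the easy reverse inequality $\|\alpha\|_{W_q^{(1)}}\leq|\alpha|_{W_{q,1}^{(1)}}$ for $d=1$, which the paper dismisses as straightforward.
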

\begin{proof}
The case $d=1$ is straightforward. Hence, assume $d\geq2$ and $C\geq1$. Let $N\in\mathbb{N}$ be such that $\frac{1}{|\textbf{n}|}\sum_{\textbf{k}=0}^{\textbf{n}-1}|\alpha_{\textbf{k}}|^q<\|\alpha\|_{W_q^{(d)}}^q+1$ for every $\textbf{n}=(n_1,...,n_d)\in\mathbb{N}^d$ with $\min\{n_1,...,n_d\}\geq N$.

Let $A_N=\{(m_1,...,m_d)\in\textbf{N}_C^{(d)}:\min\{m_1,...,m_d\}< N\}$. If $(n_1,...,n_d)\in A_N$, then there exists $i$ such that $n_i< N$. Then, for every $j\in\{1,...,d\}$, by definition of being in $\textbf{N}_C^{(d)}$ we find that $n_j\leq Cn_i<CN$. Since $j$ was arbitrary, it follows that $(n_1,...,n_d)\in([0,CN]\cap\mathbb{N})^d$. Therefore it follows that $A_N\subset([0,CN]\cap\mathbb{N})^d$, which means $A_N$ is a finite set, and so
$$\left(\sup_{\textbf{n}\in A_N}\frac{1}{|\textbf{n}|}\sum_{\textbf{k}=0}^{\textbf{n}-1}|\alpha_{\textbf{k}}|^q\right)^{1/q}<\infty.$$
Combining this with how $N$ was chosen shows that the supremum over all of $\textbf{N}_C^{(d)}$ is finite as well.
\end{proof}

We define $W_q^{(d)}$ as
$$W_q^{(d)}=\left\{\alpha=(\alpha_\textbf{n})_{\textbf{n}\in\mathbb{N}_0^d}\subset\mathbb{C}:\|\alpha\|_{W_q^{(d)}}<\infty\right\}.$$ Let $W_\infty^{(d)}$ denote the space of all $\alpha=(\alpha_{\textbf{n}})_{\textbf{n}\in\mathbb{N}_0^d}\subset\mathbb{C}$, with $\sup_{\textbf{n}}|\alpha_\textbf{n}|<\infty$, and let $|\alpha|_{W_{\infty,C}^{(d)}}=\|\alpha\|_{W_\infty^{(d)}}=\sup_{\textbf{n}\in\mathbb{N}_0^d}|\alpha_{\textbf{n}}|$. If $1\leq r<s\leq\infty$, then $W_s^{(d)}\subset W_r^{(d)}$, and $\|\alpha\|_{W_r^{(d)}}\leq\|\alpha\|_{W_s^{(d)}}$ and $|\alpha|_{W_{r,C}^{(d)}}\leq|\alpha|_{W_{s,C}^{(d)}}$ for every $\alpha$. For every $1\leq q\leq\infty$ it is known that $\|\cdot\|_{W_q^{(d)}}$ defines a seminorm on $W_q^{(d)}$ under which it is complete.

Given $\mathcal{W}\subseteq W_1^{(d)}$, we will write $\mathcal{W}^+$ for the set of sequences $\alpha=(\alpha_\textbf{n})_{\textbf{n}\in\mathbb{N}_0^d}\in\mathcal{W}$ such that $\alpha_\textbf{n}\geq0$ for every $\textbf{n}\in\mathbb{N}_0^d$. Note that any $\alpha\in W_q^{(d)}$ can be written as a linear combination of four elements of $\alpha_0,...,\alpha_3\in (W_q^{(d)})^+$ for each $1\leq q\leq\infty$, and these elements can be chosen so that $|\alpha_j|_{W_{q,C}^{(d)}}\leq|\alpha|_{W_{q,C}^{(d)}}$ for each $j=0,...,3$.

For any $\underline{\lambda}=(\lambda_1,...,\lambda_d)\in\mathbb{T}^d$ and $\textbf{k}=(k_1,...,k_d)\in\mathbb{Z}^d$, we will write $\underline{\lambda}^{\textbf{k}}=\lambda_1^{k_1}\cdots \lambda_d^{k_d}$. A trigonometric polynomial is a function $P:\mathbb{Z}^d\to\mathbb{C}$ such that there exists $\underline{\lambda}_1,...,\underline{\lambda}_k\in\mathbb{T}^d$ and $r_1,...,r_k\in\mathbb{C}$ such that $P(\textbf{n})=\sum_{j=1}^{k}r_j\underline{\lambda}_j^{\textbf{n}}$ for every $\textbf{n}\in\mathbb{Z}^d$. A trigonometric polynomial $P$ will also induce a sequence in $W_\infty^{(d)}$ via $(P(\textbf{n}))_{\textbf{n}\in\mathbb{N}_0^d}$, and denote $\mathcal{T}^{(d)}\subset W_\infty^{(d)}$ for the subspace of all such sequences.

There are a few important subspaces of $W_1$ that we wish to mention in particular. To begin with, we will write $W_{1^+}^{(d)}$ for the closure of $\bigcup_{r>1}W_r^{(d)}$ with respect to the $W_1^{(d)}$-seminorm. Next, given $1\leq q<\infty$, we will write $B_q^{(d)}$ for the $W_q^{(d)}$-seminorm of the trigonometric polynomials $\mathcal{T}^{(d)}$; the elements of $B_q^{(d)}$ will be called \textit{$q$-Besicovitch sequences}. In other words, if $\alpha=(\alpha_\textbf{n})_{\textbf{n}\in\mathbb{N}_0^d}\in B_q^{(d)}$, then for every $\epsilon>0$ there exists a trigonometric polynomial $P_\epsilon$ such that
$$\left(\limsup_{\textbf{n}\to\infty}\frac{1}{|\textbf{n}|}\sum_{\textbf{k}=0}^{\textbf{n}-1}|\alpha_{\textbf{k}}-P_\epsilon(\textbf{k})|^q\right)^{\frac{1}{q}}<\epsilon.$$
Note that $\alpha\in B_q^{(d)}$ implies that $\alpha\in B_1^{(d)}$ as well. Elements of $B_\infty^{(d)}:=B_1^{(d)}\cap W_\infty$ are \textit{bounded Besicovitch sequences}. It is known that $B_1^{(d)}\cap W_\infty^{(d)}=B_r^{(d)}\cap W_\infty^{(d)}$ for every $1\leq r<\infty$ (see \cite[Theorem 3.1]{jo}), so the choice of $r=1$ does not make any difference.

A sequence $\alpha=(\alpha_{\textbf{n}})_{\textbf{n}\in\mathbb{N}_0^d}\in W_1^{(d)}$ is called a \textit{Hartman sequence} if
$$\lim_{\textbf{n}\to\infty}\frac{1}{|\textbf{n}|}\sum_{\textbf{k}=0}^{\textbf{n}-1}\alpha_{\textbf{k}}\underline{\lambda}^{\textbf{k}}\text{ exists for every }\underline{\lambda}\in\mathbb{T}^d,$$ and let $H^{(d)}\subset W_1^{(d)}$ denote the space of all Hartman sequences. It is known that $H^{(d)}$ is closed in $W_1^{(d)}$ and that $B_q^{(d)}\subset H^{(d)}\cap W_q^{(d)}$ for every $1\leq q\leq\infty$. We will only be concerned with the single parameter case $d=1$.

Numerous examples of \textit{bounded} Besicovitch sequences were given in \cite[Section 3]{bl}. In \cite{lot} it was shown that, if $(X,\mu)$ is a probability space, $\phi:X\to X$ is an ergodic $\mu$-preserving transformation, $1<q<\infty$, and $f\in L_q(X,\mu)$, then $(f(\phi^k(x)))_{k=0}^{\infty}\in W_q^{(1)}\cap H^{(1)}$ for $\mu$-a.e. $x\in X$ by the Birkhoff Individual Ergodic Theorem and the Wiener-Wintner Ergodic Theorem. Furthermore, if $\phi$ is not weakly mixing and $f$ is not constant $\mu$-a.e., then this sequence need not be a Besicovitch sequence. In \cite{comli2} the authors also proved that some sequences induced by admissible processes are bounded Besicovitch.

For ease of reference, we will present a method of constructing $q$-Besicovitch sequences in the single parameter case. The method is very similar to that of \cite[Theorem 3.19(2)]{bl}, with the main difference being that the sequences obtained are not necessarily bounded, and we limit ourselves to the particular case of an irrational rotation on $\mathbb{T}$. However, this important case should also help motivate the sequence definition of trigonometric polynomials.

\begin{ex}\label{e21}
Let $m$ denote the Lebesgue measure on $\mathbb{T}$. Fix $1<q<\infty$ and $f\in L_q(\mathbb{T},m)$. Let $\mu\in\mathbb{T}$ not be a root of unity (i.e. $\mu^n\neq1$ for every $n\in\mathbb{Z}\setminus\{0\}$), and define $\phi_{\mu}:\mathbb{T}\to\mathbb{T}$ by $\phi_{\mu}(\lambda)=\mu\lambda$. Then $\phi_{\mu}$ is an ergodic $m$-preserving transformation, so that there exists $F\subseteq\mathbb{T}$ with $m(F)=1$ such that $(f(\phi_{\mu}^{k}(\lambda)))_{k=0}^{\infty}\in W_q^{(1)}\cap H^{(1)}$ for every $\lambda\in F$ by \cite{lot} as stated above. We will show that $(f(\phi_{\mu}^{k}(\lambda)))_{k=0}^{\infty}\in B_q^{(1)}$ for $m$-a.e. $\lambda\in\mathbb{T}$.

Fix $N\in\mathbb{N}$. Since polynomials are dense in $L_q(\mathbb{T},m)$, there exists $g_N\in L_\infty(\mathbb{T},m)$ such that $\|f-g_N\|_q<\frac{1}{N}$ and $g_N(\lambda)=\sum_{j=0}^{\ell}\alpha_{j}\lambda^j$ for some $\alpha_0,...,\alpha_{\ell}\in\mathbb{C}$ and every $\lambda\in\mathbb{T}$. Since $\phi_{\mu}$ is ergodic, by applying Birkhoff's individual ergodic theorem to $|f-g_N|^q$, there exists a set $E_N\subseteq\mathbb{T}$ with $m(E_N)=1$ such that for every $\lambda\in E_N$,
$$\frac{1}{N^q}>\|f-g_N\|_q^q=\int_{\mathbb{T}}|f(\lambda)-g_N(\lambda)|^qdm(\lambda)=\lim_{n\to\infty}\frac{1}{n}\sum_{k=0}^{n-1}|f(\phi_{\mu}^k(\lambda))-g_N(\phi_{\mu}^k(\lambda))|^q.$$
Expanding the expression for $g_N(\phi_{\mu}^k(\lambda))$ shows that
$$g_N(\phi^k(\lambda))=\sum_{j=0}^{\ell}\alpha_{j}(\phi_{\mu}^k(\lambda))^j=\sum_{j=0}^{\ell}\alpha_j\lambda^j(\mu^j)^k.$$ Defining $P_{N,\lambda}:\mathbb{Z}\to\mathbb{C}$ by $P_{N,\lambda}(k)=g_N(\phi_{\mu}^k(\lambda))$, we find that $(P_{N,\lambda}(k))_{k=0}^{\infty}$ is a trigonometric polynomial satisfying
$$\frac{1}{N}>\left(\lim_{n\to\infty}\frac{1}{n}\sum_{k=0}^{n-1}|f(\phi_{\mu}^k(\lambda))-P_{N,\lambda}(k)|^q\right)^{1/q}.$$
If $E=F\cap\left(\bigcap_{N=1}^{\infty}E_N\right)$, then $m(E)=1$ and the above arguments then imply that if $\lambda\in E$, then $(P_{N,\lambda}(k))_{k=0}^{\infty}\to(f(\phi_\mu^k(\lambda)))_{k=0}^{\infty}$ as $N\to\infty$ with respect to $\|\cdot\|_{W_q^{(1)}}$. Therefore, if $1<q<\infty$, then $(f(\phi_{\mu}^{k}(\lambda)))_{k=0}^{\infty}\in B_q^{(1)}$ for every $f\in L_q(\mathbb{T},m)$ and for $m$-a.e. $\lambda\in\mathbb{T}$.
\end{ex}

\subsection{Noncommutative Ergodic Theorems}
For $1\leq p\leq\infty$, $\textbf{T}=(T_1,...,T_d)\in DS^+(\mathcal{M},\tau)^d$, $\textbf{n}=(n_1,...,n_d)\in\mathbb{N}^d$, $x\in L_p(\mathcal{M},\tau)$, and $\alpha=(\alpha_\textbf{k})_{\textbf{k}\in\mathbb{N}_0^d}\in W_1^{(d)}$, we will write 
$$M_\textbf{n}^\alpha(\textbf{T})(x):=\frac{1}{|\textbf{n}|}\sum_{\textbf{k}=0}^{\textbf{n}-1}\alpha_{\textbf{k}}\textbf{T}^{\textbf{k}}(x)=\frac{1}{n_1\cdots n_d}\sum_{k_1=0}^{n_1-1}\cdots\sum_{k_d=0}^{n_d-1}\alpha_{k_1,...,k_d}T_1^{k_1}\cdots T_d^{k_d}(x)$$ for the $\textbf{n}$-th $\alpha$-weighted multiparameter average of $x$ with respect to $T_1,...,T_d$. We will write $M_\textbf{n}(\textbf{T})$ to denote the corresponding usual averages with $\alpha_\textbf{n}=1$ for every $\textbf{n}\in\mathbb{N}_0^d$.

The notion of pointwise convergence doesn't necessarily make sense as is in the noncommutative setting. However, after a minor modification a noncommutative analogue of it does exist. As such, we will replace a.e. convergence with the following definitions inspired by Egorov's Theorem.

Let $(x_\textbf{n})_{\textbf{n}\in\mathbb{N}_0^d}\subseteq L_0(\mathcal{M},\tau)$ and $x\in L_0(\mathcal{M},\tau)$. Say that $x_\textbf{n}\to x$ bilaterally almost uniformly (b.a.u.) as $\textbf{n}\to\infty$ if, for every $\epsilon>0$, there exists $e\in\mathcal{P}(\mathcal{M})$ such that
$$\tau(e^\perp)\leq\epsilon \, \text{ and } \, \lim_{\textbf{n}\to\infty}\|e(x_\textbf{n}-x)e\|_\infty=0.$$
We will say that $x_\textbf{n}\to x$ almost uniformly (a.u.) if the limit $\lim_{\textbf{n}\to\infty}\|e(x_\textbf{n}-x)e\|_\infty$ can be replaced by $\lim_{\textbf{n}\to\infty}\|(x_\textbf{n}-x)e\|_\infty$.

Let $(\mathcal{M},\tau)$ be a semifinite von Neumann algebra and $(X,\|\cdot\|)$ be a normed space. A family of additive maps $S_i:X\to L_0(\mathcal{M},\tau)$, $i\in I$ (where $I$ is an arbitrary index set), is \textit{bilaterally uniformly equicontinuous in measure (b.u.e.m.) at $0$ on $(X,\|\cdot\|)$} if, for every $\epsilon,\delta>0$, there exists $\gamma>0$ such that $\|x\|<\gamma$ implies the existence of $e\in\mathcal{P}(\mathcal{M})$ such that
$$\tau(e^\perp)\leq\epsilon \, \text{ and } \, \sup_{i\in I}\|eS_i(x)e\|_\infty\leq\delta.$$
Similarly, the sequence is \textit{uniformly equicontinuous in measure (u.e.m.) at $0$ on $(X,\|\cdot\|)$} if $\sup_{i\in I}\|eS_i(x)e\|_\infty$ can be replaced by $\sup_{i\in I}\|S_i(x)e\|_\infty$.

Given $1\leq p<\infty$ and an index set $I$ (which may be uncountable), a family $S=(S_i)_{i\in I}$ of maps from $L_p(\mathcal{M},\tau)$ to $L_0(\mathcal{M},\tau)$ is of weak type $(p,p)$ with constant $C>0$ if, for every $\lambda>0$ and $x\in L_p(\mathcal{M},\tau)$, there exists $e\in\mathcal{P}(\mathcal{M})$ such that
$$\tau(e^\perp)\leq\frac{C^p\|x\|_p^p}{\lambda^p}\text{ and }\sup_{i\in I}\|eS_i(x)e\|_\infty\leq\lambda.$$

Note that $S=(S_n)_{i\in I}$ being weak type $(p,p)$ implies that it is b.u.e.m. at $0$ on $(L_p,\|\cdot\|_p)$ (use $\lambda=\delta$ and $\gamma=\epsilon^{\frac{1}{p}}\delta/C$).

Yeadon's noncommutative maximal ergodic inequality \cite[Theorem 2.1]{ye} will play an important role in this paper. For convenience, we state it and other useful results that will be used below.
\begin{teo}(Cf. \cite[Theorem 2.1]{ye})
Let $(\mathcal{M},\tau)$ be a semifinite von Neumann algebra and $T\in DS^+(\mathcal{M},\tau)$. Then the averages $(M_n(T))_{n=1}^{\infty}$ are of weak type $(1,1)$ with constant $C>0$. In particular, for every $x\in L_1^+$ and $\lambda>0$ there exists $e\in\mathcal{P}(\mathcal{M})$ such that
$$\tau(e^\perp)\leq\frac{\|x\|_1}{\lambda} \ \text{ and } \ \sup_{n}\|eM_n(T)(x)e\|_\infty\leq\lambda.$$
\end{teo}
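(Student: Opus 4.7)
The plan is to follow Yeadon's original strategy, a noncommutative adaptation of the Hopf maximal ergodic lemma in which spectral projections replace indicator sets of exceedance. Fix $x\in L_1^+$ and $\lambda>0$, write $S_n=\sum_{k=0}^{n-1}T^k(x)\in L_1^+$, and for each $n$ apply spectral calculus to the self-adjoint element $S_n$ to define
$$q_n:=\chi_{(n\lambda,\infty)}(S_n)\in\mathcal{P}(\mathcal{M}),$$
the projection onto the part of the spectrum of $S_n$ exceeding $n\lambda$. The candidate projection for the conclusion is $e:=\bigl(\bigvee_{n\geq 1}q_n\bigr)^{\perp}$.

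The norm bound $\|eM_n(T)(x)e\|_{\infty}\leq\lambda$ is the easy half. Since $e\leq q_n^{\perp}$ and $q_n^{\perp}(S_n-n\lambda\mathbf{1})q_n^{\perp}\leq 0$ by spectral calculus (on the range of $q_n^{\perp}$ the operator $S_n$ is bounded above by $n\lambda$), one obtains $eS_ne=eq_n^{\perp}S_nq_n^{\perp}e\leq n\lambda\,e$; dividing by $n$ yields the claim.

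The substantive step is the trace estimate $\tau\bigl(\bigvee_{n\geq 1}q_n\bigr)\leq\|x\|_1/\lambda$. I would introduce the decreasing sequence $p_0:=\mathbf{1}$, $p_n:=p_{n-1}\wedge q_n^{\perp}$, so that $\mathbf{1}-p_N=\bigvee_{n\leq N}q_n$, establish $\tau(\mathbf{1}-p_N)\leq\|x\|_1/\lambda$ for every finite $N$, and then pass to the limit using normality of $\tau$ together with $\mathbf{1}-p_N\nearrow\bigvee_{n\geq 1}q_n$. The finite-$N$ bound combines the spectral order inequality $p_{n-1}S_{n-1}p_{n-1}\leq(n-1)\lambda\,p_{n-1}$ (which follows from $p_{n-1}\leq q_{n-1}^{\perp}$) with the positivity of $T$ and the $L_1$-contractivity $\tau(T^k(x))\leq\|x\|_1$, relying only on the implication $a\leq b\Rightarrow pap\leq pbp$ for self-adjoint $a,b$ and projections $p$.

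The main obstacle is precisely this finite-$N$ bound. The classical Hopf argument partitions the underlying space by the first exit time of the running averages, but here the projections $q_n$ do not commute with each other or with the $S_k$, so the disjoint partition is unavailable. All manipulations must be phrased in terms of the spectral order for self-adjoint operators, products of non-commuting projections cannot be cancelled freely, and Kaplansky-type formulas for $p\wedge q$ must substitute for set intersections. This noncommutative bookkeeping is the technical heart of Yeadon's paper, and handling it carefully is what forces the weak-type constant to come out optimal (equal to $1$) rather than inflated by interpolation or transference artifacts.
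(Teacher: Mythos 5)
The first thing to note is that the paper does not prove this statement at all: it is imported verbatim from Yeadon's 1977 paper (reference \cite{ye}, Theorem 2.1) and used as a black box, so there is no internal proof to compare against. Judged on its own terms, your sketch gets the easy half right: given a projection $e$ with $e\leq q_n^{\perp}$ for all $n$, the compression argument $eS_ne=eq_n^{\perp}S_nq_n^{\perp}e\leq n\lambda e$ is correct. But the entire content of the theorem lives in the trace estimate $\tau\bigl(\bigvee_{n}q_n\bigr)\leq\|x\|_1/\lambda$, and that is exactly the step you label ``the main obstacle'' and then do not carry out. Chebyshev gives $\tau(q_n)\leq\|x\|_1/\lambda$ for each \emph{fixed} $n$; a union bound over $n\leq N$ gives $N\|x\|_1/\lambda$, which is useless. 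Removing the factor of $N$ \emph{is} the noncommutative Hopf maximal ergodic lemma, and the plan you outline does not go through as stated: writing $\mathbf{1}-p_N=\sum_{n=1}^{N}(p_{n-1}-p_n)$, the increment $p_{n-1}-p_n=p_{n-1}-p_{n-1}\wedge q_n^{\perp}$ is only Murray--von Neumann \emph{equivalent} to a subprojection of $q_n$ (Kaplansky's formula), not dominated by one, so you cannot conclude $\tau\bigl((p_{n-1}-p_n)S_n(p_{n-1}-p_n)\bigr)\geq n\lambda\,\tau(p_{n-1}-p_n)$; and even granting such a lower bound, summing the exceedances against $\|x\|_1$ requires the telescoping $S_{n+1}=x+T(S_n)$ together with the positivity and trace-decreasing property of $T$, which is where all of the work is. The inequality $p_{n-1}S_{n-1}p_{n-1}\leq(n-1)\lambda p_{n-1}$ that you cite as the main ingredient is true but points in the wrong direction for the trace bound: it controls the part of the space you \emph{keep}, whereas the weak type estimate requires a lower bound on $S_n$ over the part you \emph{discard}.

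This is also why the standard proofs (Yeadon's original argument and its later expositions) do not build the exceptional projection from the raw spectral projections $\chi_{(n\lambda,\infty)}(S_n)$ glued together by lattice meets. The projections are constructed inductively from the \emph{compressed} operators --- in the spirit of Cuculescu's construction, $e_0=\mathbf{1}$ and $e_n$ a spectral projection of $e_{n-1}S_n(x)e_{n-1}$ sitting below $e_{n-1}$ --- precisely so that the discarded piece at stage $n$ commutes with the operator it is exceeding and can be paired with a usable trace inequality, after which one passes to the limit in $N$ by normality of $\tau$. Identifying the obstruction is not the same as overcoming it: as written, your proposal establishes the conclusion for each single fixed $n$ (with the correct constant), but not uniformly in $n$, so the weak type $(1,1)$ inequality is not proved.
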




\begin{pro}(Cf. \cite[Proposition 3.1]{cl2})\label{p22}
Let $(\mathcal{M},\tau)$ be a semifinite von Neumann algebra and $(X,\|\cdot\|)$ be a Banach space. Let $A_n:X\to L_0(\mathcal{M},\tau)$ be a family of additive maps. If $(A_n)_{n=1}^{\infty}$ is b.u.e.m. (u.e.m.) at zero on $(X,\|\cdot\|)$, then the set
$$\{x\in X:(A_n(x))_{n=1}^{\infty}\text{ converges b.a.u. (respectively, a.u.)}\}$$ is a closed subspace of $X$. 
\end{pro}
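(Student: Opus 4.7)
The first assertion, that $Y := \{x \in X : (A_n(x))_{n=1}^\infty \text{ converges b.a.u.}\}$ is a linear subspace, follows directly from the additivity of each $A_n$: if $A_n(x) \to y$ and $A_n(x') \to y'$ b.a.u., then for any $\epsilon > 0$ I pick witnessing projections $e, e' \in \mathcal{P}(\mathcal{M})$ with $\tau(e^\perp), \tau((e')^\perp) \le \epsilon/2$ and observe that $p := e \wedge e'$ satisfies $\tau(p^\perp) \le \epsilon$ and $\|p(A_n(x + x') - (y + y'))p\|_\infty \to 0$; scalar multiples are immediate.

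For closedness, let $(x_k) \subset Y$ with $x_k \to x$ in $X$ and fix $\epsilon > 0$. Using the b.u.e.m.\ hypothesis at $0$, for each $j \in \mathbb{N}$ I choose $\gamma_j > 0$ such that $\|y\| < \gamma_j$ yields a projection $f$ with $\tau(f^\perp) \le \epsilon/2^{j+1}$ and $\sup_n \|fA_n(y)f\|_\infty \le 2^{-j}$. Extract a subsequence $(x_{k_j})$ with $\|x - x_{k_j}\| < \gamma_j$ and let $f_j$ be the resulting projection applied to $y = x - x_{k_j}$. Since each $A_n(x_{k_j}) \to y_j$ b.a.u.\ for some $y_j \in L_0(\mathcal{M}, \tau)$, there is also a projection $g_j$ with $\tau(g_j^\perp) \le \epsilon/2^{j+1}$ and $\lim_n \|g_j(A_n(x_{k_j}) - y_j)g_j\|_\infty = 0$. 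Set $e := \bigwedge_{j=1}^\infty (f_j \wedge g_j)$; then $\tau(e^\perp) \le \sum_j (\tau(f_j^\perp) + \tau(g_j^\perp)) \le \epsilon$, and since $e \le f_j \wedge g_j$ the triangle inequality gives
$$\|e(A_n(x) - A_m(x))e\|_\infty \le 2\cdot 2^{-j} + \|g_j(A_n(x_{k_j}) - A_m(x_{k_j}))g_j\|_\infty$$
for each $j$, with the second summand vanishing as $n, m \to \infty$ by the Cauchy property inherited from the b.a.u.\ convergence of $(A_n(x_{k_j}))$. As $j$ is arbitrary, $(eA_n(x)e)_n$ is Cauchy in $(\mathcal{M}, \|\cdot\|_\infty)$, converging in operator norm to some $z_\epsilon \in e\mathcal{M}e$.

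To upgrade this into a single b.a.u.-limit in $L_0(\mathcal{M}, \tau)$, I observe that the b.a.u.-Cauchy estimate produced above for arbitrary $\epsilon$ forces $(A_n(x))$ to be Cauchy in the measure topology, so by completeness of $L_0(\mathcal{M}, \tau)$ there exists $y \in L_0(\mathcal{M}, \tau)$ with $A_n(x) \to y$ in measure. Continuity of left and right multiplication by $e$ then gives $eA_n(x)e \to eye$ in measure, while simultaneously $eA_n(x)e \to z_\epsilon$ in operator norm (hence in measure); uniqueness of measure limits yields $z_\epsilon = eye$, so $\|e(A_n(x) - y)e\|_\infty \to 0$. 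Since $\epsilon > 0$ was arbitrary, $A_n(x) \to y$ b.a.u. The u.e.m./a.u.\ case follows the same template with every two-sided compression $e \cdot e$ replaced by the one-sided $\cdot\,e$. The main technical obstacle is the projection bookkeeping --- arranging the meet $\bigwedge(f_j \wedge g_j)$ so that $\tau(e^\perp) \le \epsilon$ is preserved while every compressed difference $\|e(A_n(x) - A_m(x))e\|_\infty$ is simultaneously controlled; once that Cauchy property is in place, the passage to an $L_0$-limit via measure completeness is a clean finishing move.
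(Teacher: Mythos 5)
Your argument follows the standard route for this statement; the paper itself gives no proof and simply quotes it from \cite[Proposition 3.1]{cl2}, and your reconstruction (approximate $x$ by $x_{k_j}$, use the b.u.e.m.\ hypothesis on the errors $x-x_{k_j}$ with geometrically decaying trace budgets, intersect all the projections, and conclude that $(eA_n(x)e)_n$ is $\|\cdot\|_\infty$-Cauchy) is exactly the argument in that reference. The subspace claim and the bookkeeping $\tau(e^\perp)\le\sum_j\bigl(\tau(f_j^\perp)+\tau(g_j^\perp)\bigr)\le\epsilon$ are fine. (One pedantic remark: for merely \emph{additive} $A_n$ you only get closure under $\mathbb{Q}$-multiples directly; real multiples then follow from the closedness you prove, and this is the level of rigor of the original statement as well.)

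The one place where you write ``I observe'' but an actual argument is needed is the passage from b.a.u.-Cauchy to Cauchy in the measure topology. For the \emph{one-sided} (a.u.) case this really is immediate, since $\|(x_n-x_m)e\|_\infty\le\delta$ with $\tau(e^\perp)\le\epsilon$ literally says $x_n-x_m\in V(\epsilon,\delta)$. For the bilateral case it is not: the estimate $\|eze\|_\infty\le\delta$ controls only the corner $eze$ and says nothing directly about $ze$ or $ez$, so $z=x_n-x_m$ does not visibly lie in any $V(\epsilon',\delta')$. The implication is nevertheless true, and the clean way to see it is via the decomposition $z=eze+eze^\perp+e^\perp z$ together with generalized singular values: the right support of $eze^\perp$ and the left support of $e^\perp z$ are dominated by $e^\perp$, so $\mu_t(eze^\perp)=\mu_t(e^\perp z)=0$ for $t\ge\tau(e^\perp)$, whence $\mu_{3\epsilon}(z)\le\mu_\epsilon(eze)+\mu_\epsilon(eze^\perp)+\mu_\epsilon(e^\perp z)\le\|eze\|_\infty$, i.e.\ $z\in V(3\epsilon,\delta)$. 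With that lemma in hand (it appears in the Chilin--Litvinov papers as the statement that b.a.u.\ convergence implies convergence in measure), your completion of the proof --- measure-limit $y$ exists by completeness of $L_0$, the unit ball of $\mathcal{M}$ is closed in measure so $\|e(A_n(x)-y)e\|_\infty\le\sup_{m\ge n}\|e(A_n(x)-A_m(x))e\|_\infty\to0$ --- goes through. So the proof is correct, but this step should be justified or cited rather than asserted.
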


Finally, we will make use of the following reduction of order technique of Brunel. 
This will allow us to bound multiparameter averages by the averages of a single operator. 
Although originally proven in the commutative setting (see \cite{kr} for a proof), it was noted in \cite[Theorem 5.2]{cl2} that the proof extends to the noncommutative setting as well.

\begin{teo}\label{t22}
Let $(\mathcal{M},\tau)$ be a semifinite von Neumann algebra. Then for any $d\in\mathbb{N}$ with $d>1$ there exists a constant $\chi_d>0$ and a family $\{a_{\textbf{n}}>0:\textbf{n}\in\mathbb{N}_0^d\}$ with $\sum_{\mathbf{n}}a_{\mathbf{n}}=1$ such that, for any family $T_1,...,T_d:L_1(\mathcal{M},\tau)\to L_1(\mathcal{M},\tau)$ of commuting positive contractions, the operator $S=\sum_{(n_1,...,n_d)\in\mathbb{N}_0^d}a_{n_1,...,n_d}T_1^{n_1}\cdots T_d^{n_d}$ satisfies
$$\frac{1}{n^d}\sum_{k_1=0}^{n-1}\cdots\sum_{k_d=0}^{n-1}T_1^{k_1}\cdots T_d^{k_d}(x)\leq\frac{\chi_d}{n_d}\sum_{j=0}^{n_d-1}S^j(x)$$ for every $n\in\mathbb{N}$ and $x\in L_1(\mathcal{M},\tau)^+$, where $n_d\in\mathbb{N}$ depends on $n$ and $d$.
\end{teo}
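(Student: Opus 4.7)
The plan is to mimic the classical Brunel construction of the operator $S$ and transfer it to the noncommutative $L_1$ setting, relying on positivity rather than on any pointwise a.e.\ arguments. The only genuinely analytic ingredients needed beyond the commutative proof are: (i) positive $L_1$-contractions preserve the positive cone $L_1(\mathcal{M},\tau)^+$, (ii) monotone nets of positive contractions on $L_1(\mathcal{M},\tau)$ have strong operator limits on $L_1^+$, and (iii) term-by-term scalar comparisons between positive operator series carry over to inequalities in $L_1(\mathcal{M},\tau)^+$. All three are standard.

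First, I would choose a sequence of positive scalars $\{a_n\}_{n\geq 0}$ on $\mathbb{N}_0$ summing to $1$ with appropriate decay (e.g., a Cesàro-type weighting as in \cite{kr}), and form the product weights $a_{\textbf{n}} = a_{n_1}\cdots a_{n_d}$ for $\textbf{n} = (n_1,\ldots,n_d) \in \mathbb{N}_0^d$. These are positive and sum to $1$. Since each $T_i$ is a positive contraction on $L_1(\mathcal{M},\tau)$ and the $T_i$ pairwise commute, the partial sums
$$ \sum_{\textbf{n}\in F} a_{\textbf{n}}\, T_1^{n_1}\cdots T_d^{n_d}, \qquad F \subset \mathbb{N}_0^d \text{ finite}, $$
form a monotone net of positive contractions on $L_1(\mathcal{M},\tau)$ and converge in the strong operator topology on $L_1^+$ to a positive contraction $S$.

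Second, I would expand $S^j$ as an operator series. Commutativity of $T_1,\ldots,T_d$ gives
$$ S^j = \sum_{\textbf{k}\in\mathbb{N}_0^d} b^{(j)}_{\textbf{k}}\, T_1^{k_1}\cdots T_d^{k_d}, $$
where $b^{(j)}_{\textbf{k}} \geq 0$ is the $j$-fold convolution of $\{a_{\textbf{n}}\}$ with itself. The key combinatorial input from Brunel's argument (see \cite{kr}) is that the weights $\{a_n\}$ can be chosen so that, for every $n \in \mathbb{N}$, there exist $n_d = n_d(n,d) \in \mathbb{N}$ and a constant $\chi_d > 0$ independent of $n$ with
$$ \frac{1}{n_d}\sum_{j=0}^{n_d-1} b^{(j)}_{\textbf{k}} \;\geq\; \frac{1}{\chi_d\, n^d} \qquad \text{for every } \textbf{k} \in \{0,1,\ldots,n-1\}^d. $$
Granted this scalar inequality, the desired operator inequality follows immediately: for $x \in L_1(\mathcal{M},\tau)^+$, every term $T_1^{k_1}\cdots T_d^{k_d}(x)$ lies in $L_1^+$, so comparing the coefficients of these positive terms on both sides yields
$$ \frac{1}{n^d}\sum_{k_1=0}^{n-1}\cdots\sum_{k_d=0}^{n-1} T_1^{k_1}\cdots T_d^{k_d}(x) \;\leq\; \frac{\chi_d}{n_d}\sum_{j=0}^{n_d-1} S^j(x), $$
with the extra (nonnegative) terms present on the right only enlarging the right-hand side.

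The main obstacle is the combinatorial coefficient estimate in the second step: exhibiting a concrete weight sequence $\{a_n\}$ whose $j$-fold convolutions, when Cesàro-averaged over $0 \leq j \leq n_d - 1$, are uniformly bounded below on the cube $\{0,\ldots,n-1\}^d$ by a constant multiple of $n^{-d}$. This is exactly the content of Brunel's original argument in \cite{kr}, and once it is in hand, the transfer to the noncommutative setting is entirely formal, since every step operates at the level of nonnegative scalar coefficients multiplying positive operators in $L_1(\mathcal{M},\tau)^+$; no property specific to the commutative setting (such as pointwise evaluation) is ever used.
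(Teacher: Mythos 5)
Your overall skeleton is the same as the paper's: the paper does not prove this theorem but cites Krengel \cite{kr} for the combinatorial/commutative content and \cite[Theorem 5.2]{cl2} for the observation that the argument transfers verbatim to the noncommutative setting, and your third step correctly isolates why that transfer is purely formal (both sides are nonnegative linear combinations of the positive elements $T_1^{k_1}\cdots T_d^{k_d}(x)$, so only the scalar coefficients matter). The norm-convergence of the defining series for $S$ and of the expansion of $S^j$ is also handled correctly.

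There is, however, one concrete misstep: the choice $a_{\mathbf{n}}=a_{n_1}\cdots a_{n_d}$ of \emph{product} weights is not Brunel's construction, and the coefficient estimate you then import from \cite{kr} is proved there for a different, coupled family of weights --- essentially $a_{\mathbf{n}}=c_{|\mathbf{n}|}\binom{|\mathbf{n}|}{n_1,\dots,n_d}d^{-|\mathbf{n}|}$, arising from $S=h\bigl(\tfrac{1}{d}(T_1+\cdots+T_d)\bigr)$ with $h(s)=1-\sqrt{1-s}=\sum_k c_k s^k$. For product weights the estimate is in general false: writing $b^{(j)}_{\mathbf{k}}=\prod_i b^{(j)}_{k_i}$ with $b^{(j)}$ the $j$-fold convolution of $(a_k)$, a coordinate $k_i$ that is small forces $b^{(j)}_{k_i}$ to decay exponentially in $j$ (e.g.\ $b^{(j)}_0=a_0^{\,j}$, $b^{(j)}_1=ja_0^{\,j-1}a_1$), so the Ces\`aro average $\frac{1}{n_d}\sum_{j<n_d}b^{(j)}_{\mathbf{k}}$ over $n_d\sim\sqrt{n}$ terms picks up contributions only from boundedly many $j$; for a point such as $\mathbf{k}=(1,n-1,\dots,n-1)$ this yields a bound of order $n^{-(3d-2)/2}$, which is $o(n^{-d})$ as soon as $d\geq3$. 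So the uniform lower bound $\frac{1}{n_d}\sum_{j<n_d}b^{(j)}_{\mathbf{k}}\geq(\chi_d n^d)^{-1}$ on the whole cube $\{0,\dots,n-1\}^d$ fails for your $S$. The fix is simply to take the genuine Brunel weights (which couple the coordinates through $|\mathbf{n}|$); with those, the scalar lemma you invoke is exactly what \cite{kr} proves, and the rest of your argument goes through unchanged.
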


\section{Convergence of Weighted Averages}\label{s3}

\subsection{Maximal Ergodic Inequalities} 

In this section, we will prove that the weighted averages of $T\in DS^+(\mathcal{M},\tau)$ are of weak type $(p,p)$ for weights in $W_q^{(1)}$, where $1<p,q<\infty$ and $\frac{1}{p}+\frac{1}{q}=1$. We will also prove a multiparameter version of this result for commuting operators $T_1,...,T_d\in DS^+(\mathcal{M},\tau)$ so long as the indices remain in a sector of $\mathbb{N}_0^d$. 



\begin{lm}\label{l31}
Let $(\mathcal{M},\tau)$ be a semifinite von Neumann algebra, $S_k:\mathcal{M}\to\mathcal{M}$ be positive contractions for every $k\in\mathbb{N}_0$, and $x\in \mathcal{M}^+$ be fixed. Let $1<p,q<\infty$ be such that $\frac{1}{p}+\frac{1}{q}=1$. If $\alpha=(\alpha_k)_{k=0}^{\infty}\subset[0,\infty)$, then for every $n\in\mathbb{N}$ we have 
$$
\frac{1}{n}\sum_{k=0}^{n-1}\alpha_kS_k(x)
\leq
\left(\frac{1}{n}\sum_{k=0}^{n-1}\alpha_k^q\right)^{\frac{1}{q}}\left(\frac{1}{n}\sum_{k=0}^{n-1}S_k(x^p)\right)^{\frac{1}{p}}.
$$
\end{lm}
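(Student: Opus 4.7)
My plan is to reduce the inequality to the operator H\"older inequality of Lemma \ref{l21} by taking $x_k := S_k(x)$ there. Since each $S_k:\mathcal{M}\to\mathcal{M}$ is a positive map and $x\in\mathcal{M}^+$, each $S_k(x)$ lies in $\mathcal{M}^+\subset(L_1+\mathcal{M})^+$, so Lemma \ref{l21} applies and yields
$$\frac{1}{n}\sum_{k=0}^{n-1}\alpha_k S_k(x)\leq\left(\frac{1}{n}\sum_{k=0}^{n-1}\alpha_k^q\right)^{1/q}\left(\frac{1}{n}\sum_{k=0}^{n-1}S_k(x)^p\right)^{1/p}.$$
The remaining task is to replace $S_k(x)^p$ by $S_k(x^p)$ inside the $p$-th root on the right.

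For this replacement I would invoke the operator-convexity fact recalled in the preliminaries: when $1\leq p\leq 2$ the function $t\mapsto t^p$ is operator convex on $[0,\infty)$, so for the positive map $S_k$ one obtains the pointwise bound $S_k(x)^p\leq S_k(x^p)$ for every $k$. Summing in $k$ preserves this order, and applying $t\mapsto t^{1/p}$, which is operator monotone because $0<1/p\leq 1$, yields
$$\left(\frac{1}{n}\sum_{k=0}^{n-1}S_k(x)^p\right)^{1/p}\leq\left(\frac{1}{n}\sum_{k=0}^{n-1}S_k(x^p)\right)^{1/p}.$$
Chaining this with the bound from Lemma \ref{l21} produces the statement. (An equivalent route is to apply Lemma \ref{l21} with $x_k:=S_k(x^p)^{1/p}$ at the outset; then the right-hand side is already in the desired form, and the same operator-convexity step is needed to justify $S_k(x)\leq S_k(x^p)^{1/p}$.)

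The main obstacle is the case $p>2$, where $t\mapsto t^p$ is no longer operator convex (the preliminaries exhibit an explicit $2\times 2$ counterexample for $p=3$), so the pointwise inequality $S_k(x)^p\leq S_k(x^p)$ can genuinely fail. For that regime I expect to need a different route---for instance, exploiting that the dual exponent $q$ lies in $(1,2)$ so $t\mapsto t^q$ is operator convex, or pairing the inequality against suitable positive test elements and invoking duality---so that the bound can be upgraded to one involving $S_k(x^p)$ on the right even beyond the operator-convex range of $t^p$.
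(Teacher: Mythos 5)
Your argument for $1<p\leq2$ is correct and coincides with the paper's base case: apply Lemma \ref{l21} with $x_k=S_k(x)$, use operator convexity of $t\mapsto t^p$ to get $S_k(x)^p\leq S_k(x^p)$, and pass the resulting order through the operator monotone map $t\mapsto t^{1/p}$. However, the lemma is stated for all $1<p<\infty$, and for $2<p<\infty$ you have not given a proof --- you only name the obstacle and gesture at possible routes. Neither of your suggestions closes the gap: operator convexity of $t\mapsto t^q$ for the dual exponent only concerns the scalar weights $\alpha_k^q$, which are already handled by the scalar part of H\"older and are not where the difficulty lies; and testing against positive functionals does not commute with the outer $p$-th root on the right-hand side, so a duality argument does not obviously reduce the operator inequality to a scalar one. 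This is a genuine missing idea, not a routine verification.

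The paper's mechanism for $p>2$ is an induction on dyadic ranges $(2^m,2^{m+1}]$ using the multiplicative factorization $p=p_1p_2$ with $p_1=\frac{p}{2}\in(1,2^m]$ and $p_2=2$. One first proves the base case in the \emph{two-weight} form
$$\frac{1}{n}\sum_{k=0}^{n-1}\beta_k\gamma_kS_k(x)\leq\left(\frac{1}{n}\sum_{k=0}^{n-1}\beta_k^{q}\right)^{\frac{1}{q}}\left(\frac{1}{n}\sum_{k=0}^{n-1}\gamma_k^{p}S_k(x^{p})\right)^{\frac{1}{p}},$$
which your single-weight base case does not quite deliver but which follows by the same argument. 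Then, with $q_1=\frac{p}{p-2}$, $q_2=2$, and $q=((q_1)^{-1}+(q_2p_1)^{-1})^{-1}$ (so that $\frac{q}{q_1}+\frac{q}{q_2p_1}=1$), one splits $\alpha_k=\alpha_k^{q/q_1}\cdot\alpha_k^{q/(q_2p_1)}$, applies the inductive (two-weight) inequality at exponent $p_1$ to reach an expression involving $\frac{1}{n}\sum_k\alpha_k^{q/q_2}S_k(x^{p_1})$ raised to the power $\frac{1}{p_1}$, applies the base case at exponent $p_2=2$ to that inner average (replacing $x$ by $x^{p_1}$), and finally uses operator monotonicity of $t\mapsto t^{1/p_1}$ to pass the inner bound through the outer power. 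The exponents then recombine to give exactly $\left(\frac{1}{n}\sum\alpha_k^q\right)^{1/q}\left(\frac{1}{n}\sum S_k(x^p)\right)^{1/p}$. Without some such composition scheme that never raises an operator to a power larger than $2$ in one step, the claim for $p>2$ remains unproved in your write-up.
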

\begin{proof}
First we will assume that $1<p\leq2$. Observe that, if $\beta_k,\gamma_k\geq0$ for every $k=0,...,n-1$, then Lemma \ref{l21} implies that
$$
\frac{1}{n}\sum_{k=0}^{n-1}\beta_k\gamma_kS_k(x)
\leq
\left(\frac{1}{n}\sum_{k=0}^{n-1}\beta_k^q\right)^{\frac{1}{q}}\left(\frac{1}{n}\sum_{k=0}^{n-1}\gamma_k^pS_k(x)^p\right)^{\frac{1}{p}}.$$ Since $0\leq t\mapsto t^p$ is operator convex for $1<p\leq2$, $x\in\mathcal{M}^+$, and $S_k$ is a positive contraction of $\mathcal{M}$, it follows that $S_k(x)^p\leq S_k(x^p)$ for every $k=0,...,n-1$. Since $\mathcal{M}^+$ is a positive cone, and since $0\leq t\mapsto t^{\frac{1}{p}}$ is operator monotone, it follows that
$$
\left(\frac{1}{n}\sum_{k=0}^{n-1}\gamma_k^pS_k(x)^p\right)^{\frac{1}{p}}
\leq
\left(\frac{1}{n}\sum_{k=0}^{n-1}\gamma_k^pS_k(x^p)\right)^{\frac{1}{p}}.$$
Therefore
$$
\frac{1}{n}\sum_{k=0}^{n-1}\beta_k\gamma_kS_k(x)
\leq
\left(\frac{1}{n}\sum_{k=0}^{n-1}\beta_k^q\right)^{\frac{1}{q}}\left(\frac{1}{n}\sum_{k=0}^{n-1}\gamma_k^pS_k(x^p)\right)^{\frac{1}{p}}.
$$
Since $\alpha_k\geq0$ for each $k$, one may use $\beta_k=\alpha_k$ and $\gamma_k=1$ to find that
$$\frac{1}{n}\sum_{k=0}^{n-1}\alpha_kS_k(x)\leq\left(\frac{1}{n}\sum_{k=0}^{n-1}\alpha_k^q\right)^{\frac{1}{q}}\left(\frac{1}{n}\sum_{k=0}^{n-1}S_k(x^p)\right)^{\frac{1}{p}}.$$

Now assume there is $m\in\mathbb{N}$ such that that the claim holds for every element of $(1,2^m]$, and let $p\in(2^m,2^{m+1}]$. If $p_1=\frac{p}{2}$, $p_2=2$, $q_1=\frac{p}{p-2}$, and $q_2=2$, then $p=p_1p_2$ and $\frac{1}{p_j}+\frac{1}{q_j}=1$ for $j=1,2$ (also note that $\frac{p}{2}\in(1,2^m]$). Letting $q:=((q_1)^{-1}+(q_2p_1)^{-1})^{-1}$, then $1<q<\infty$ and $\frac{1}{p}+\frac{1}{q}=1$.

The expression for $q$ may be rewritten as $\frac{q}{q_1}+\frac{q}{q_2p_1}=1$. Using $\beta_k=\alpha_k^{\frac{q}{q_1}}$ and $\gamma_k=\alpha_k^{\frac{q}{q_2p_1}}$, by the induction hypothesis we find that
\begin{align*}
\frac{1}{n}\sum_{k=0}^{n-1}\alpha_kS_k(x)
=&\frac{1}{n}\sum_{k=0}^{n-1}\alpha_k^{\frac{q}{q_1}}\alpha_k^{\frac{q}{q_2p_1}}S_k(x) \\
\leq&\left(\frac{1}{n}\sum_{k=0}^{n-1}\left(\alpha_k^{\frac{q}{q_1}}\right)^{q_1}\right)^{\frac{1}{q_1}}\left(\frac{1}{n}\sum_{k=0}^{n-1}\left(\alpha_k^{\frac{q}{q_2p_1}}\right)^{p_1}S_k(x^{p_1})\right)^{\frac{1}{p_1}} \\
=&\left(\frac{1}{n}\sum_{k=0}^{n-1}\alpha_k^q\right)^{\frac{1}{q_1}}\left(\frac{1}{n}\sum_{k=0}^{n-1}\alpha_k^{\frac{q}{q_2}}S_k(x^{p_1})\right)^{\frac{1}{p_1}}.
\end{align*}

Applying the argument for the base case again to the second term of this product using $p_2=q_2=2$, we find that
$$\frac{1}{n}\sum_{k=0}^{n-1}\alpha_k^{\frac{q}{q_2}}S_k(x^{p_1})
\leq
\left(\frac{1}{n}\sum_{k=0}^{n-1}\left(\alpha_k^{\frac{q}{q_2}}\right)^{q_2}\right)^{\frac{1}{q_2}}\left(\frac{1}{n}\sum_{k=0}^{n-1}S_k(x^{p_1p_2})\right)^{\frac{1}{p_2}}.$$

Since the map $0\leq t\mapsto t^{\frac{1}{p_1}}$ is operator monotone, with the above inequality we find that
\begin{align*}
\frac{1}{n}\sum_{k=0}^{n-1}\alpha_kS_k(x)
\leq&\left(\frac{1}{n}\sum_{k=0}^{n-1}\alpha_k^q\right)^{\frac{1}{q_1}}\left(\frac{1}{n}\sum_{k=0}^{n-1}\alpha_k^{\frac{q}{q_2}}S_k(x^{p_1})\right)^{\frac{1}{p_1}} \\
\leq&\left(\frac{1}{n}\sum_{k=0}^{n-1}\alpha_k^q\right)^{\frac{1}{q_1}}\left(\frac{1}{n}\sum_{k=0}^{n-1}\alpha_k^q\right)^{\frac{1}{q_2p_1}}\left(\frac{1}{n}\sum_{k=0}^{n-1}S_k(x^{p_1p_1})\right)^{\frac{1}{p_1p_2}} \\
=&\left(\frac{1}{n}\sum_{k=0}^{n-1}\alpha_k^q\right)^{\frac{1}{q}}\left(\frac{1}{n}\sum_{k=0}^{n-1}S_k(x^p)\right)^{\frac{1}{p}}.
\end{align*}
The claim then follows for every $1<p<\infty$ by induction on $m\in\mathbb{N}_0$.
\end{proof}

\begin{rem}
Although this lemma is similar to Lemma \ref{l21} above in essence, it is not a direct consequence of it. If $1<p\leq2$, it is indeed an immediate consequence of Lemma \ref{l21} since $T^k(x)^p\leq T^k(x^p)$ due to the operator convexity of the map $0\leq t\mapsto t^p$. On the other hand, as mentioned in Section \ref{s2}, the map $0\leq t\mapsto t^p$ is no longer operator convex when $2<p<\infty$, meaning we can no longer guarantee that $S^k(x)^p\leq S^k(x^p)$ for $x\in\mathcal{M}^+$. Due to this, one needs to utilize different techniques to extend it into this range. The author would like to thank Dr. L\'{e}onard Cadilhac for his suggestion that enabled us to extend the result to hold for every $1<p<\infty$ instead of for only $1<p\leq2$ (and only with special cases for $2<p<\infty$). 
\end{rem}

\begin{rem}
We note that this result holds for multiparameter weighted averages as well by enumerating $([0,n_1]\times\cdots\times[0,n_d])\cap\mathbb{N}_0$ if $\textbf{n}=(n_1,...,n_d)\in\mathbb{N}_0^d$ and choosing the $S_k$'s as the product of $T_i$'s with powers according to this enumeration 
For example, if $T_1,T_2\in DS^+(\mathcal{M},\tau)$ and $\textbf{n}=(2,3)$, then one could use $n=6$, $S_0=Id_{\mathcal{M}}$, $S_1=T_1$, $S_2=T_2$, $S_3=T_1T_2$, $S_4=T_2^2$, and $S_5=T_1T_2^2$.
\end{rem}

We now use this to prove a multiparameter weak type $(p,p)$ maximal ergodic inequality for commuting operators with weights in $W_q^{(d)}$.

\begin{teo}\label{t31}
Let $(\mathcal{M},\tau)$ be a semifinite von Neumann algebra, $d\in\mathbb{N}$, $\textbf{T}=(T_1,...,T_d)\in DS^+(\mathcal{M},\tau)^d$ consist of commuting operators, and $1<p,q<\infty$ be such that $\frac{1}{p}+\frac{1}{q}=1$. 
Then, for every $C\geq1$, the family $\left(|\alpha|_{W_q}^{-1}M_\textbf{n}^\alpha(\textbf{T})\right)_{(\textbf{n},\alpha)\in \textbf{N}_C^{(d)}\times W_q^{(d)}}$ of multiparameter $W_q^{(d)}$-weighted averages of $(T_1,...,T_d)$ are of weak type $(p,p)$ with constant $4^{2+\frac{1}{p}}(C^d\chi_d)^{\frac{1}{p}}$, where $\chi_d$ is the constant from Theorem \ref{t22} when $d\geq2$, or $\chi_{d}=\chi_{1}=1$ when $d=1$. 
\end{teo}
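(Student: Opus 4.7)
The plan is to reduce to the case of positive $x$ and $\alpha$, then dominate the weighted multiparameter average pointwise by a constant multiple of the $1/p$-th power of an unweighted average $M_{n_d}(S)$ for a single positive Dunford--Schwartz operator $S$, and finally apply Yeadon's weak type $(1,1)$ inequality to $x^p \in L_1^+$. For the reduction, I would decompose $x$ into its four positive parts (the positive and negative parts of the real and imaginary parts of $x$) and $\alpha$ analogously, as in Section~\ref{s22}; each positive part $x^{(k)}$ satisfies $\|x^{(k)}\|_p \leq \|x\|_p$, and each $\alpha^{(j)} \in (W_q^{(d)})^+$ satisfies $|\alpha^{(j)}|_{W_{q,C}^{(d)}} \leq |\alpha|_{W_{q,C}^{(d)}}$. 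Then $M_\textbf{n}^\alpha(\textbf{T})(x)$ is a linear combination of sixteen terms $M_\textbf{n}^{\alpha^{(j)}}(\textbf{T})(x^{(k)})$ with unit modulus coefficients, which reduces matters to the positive case at the cost of a factor of $16$ in the $L_\infty$ estimate.

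For $x,\alpha \geq 0$ and $\textbf{n}=(n_1,\ldots,n_d)\in\textbf{N}_C^{(d)}$, the multiparameter form of Lemma~\ref{l31} (noted in the remark following it) gives the pointwise inequality
$$M_\textbf{n}^\alpha(\textbf{T})(x) \leq |\alpha|_{W_{q,C}^{(d)}}\left(\frac{1}{|\textbf{n}|}\sum_{\textbf{k}=0}^{\textbf{n}-1}\textbf{T}^\textbf{k}(x^p)\right)^{1/p}.$$
When $d \geq 2$, I would set $n := \max_i n_i$, use the sector condition $|\textbf{n}| \geq (n/C)^d$, pad the sum up to the full cube $[0,n)^d$, and apply Brunel's reduction (Theorem~\ref{t22}) to produce a positive Dunford--Schwartz operator $S$ and some $n_d \in \mathbb{N}$ with
$$\frac{1}{|\textbf{n}|}\sum_{\textbf{k}=0}^{\textbf{n}-1}\textbf{T}^\textbf{k}(x^p) \leq C^d\chi_d\,M_{n_d}(S)(x^p).$$
For $d=1$, I would simply take $S=T_1$, $n_d = n_1$, and $\chi_1 = 1$. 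Taking $p$-th roots via the operator monotonicity of $t \mapsto t^{1/p}$ yields the dominating inequality $M_\textbf{n}^\alpha(\textbf{T})(x) \leq |\alpha|_{W_{q,C}^{(d)}}(C^d\chi_d)^{1/p}(M_{n_d}(S)(x^p))^{1/p}$.

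Given $\mu>0$, I would apply Yeadon's theorem to each $(x^{(k)})^p \in L_1^+$ to obtain projections $e_k$ with $\tau(e_k^\perp) \leq \|x^{(k)}\|_p^p/\mu \leq \|x\|_p^p/\mu$ and $\sup_n\|e_k M_n(S)((x^{(k)})^p) e_k\|_\infty \leq \mu$, then set $e := \bigwedge_k e_k$; since $e \leq e_k$ for every $k$, the same estimate holds with $e$ in place of each $e_k$, and $\tau(e^\perp) \leq 4\|x\|_p^p/\mu$. The key step is to convert this into $\sup_{n,k}\|e(M_n(S)((x^{(k)})^p))^{1/p}e\|_\infty \leq \mu^{1/p}$; here I would invoke the Hansen--Pedersen form of Jensen's operator inequality, which for the operator concave function $t \mapsto t^{1/p}$ on $[0,\infty)$ and a projection $e$ gives $eA^{1/p}e \leq (eAe)^{1/p}$, so that $eAe \leq \mu e$ forces $eA^{1/p}e \leq \mu^{1/p}e$. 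Combining this with the dominating inequality and summing over the sixteen pairs from the reduction step yields
$$\sup_{\textbf{n}\in\textbf{N}_C^{(d)}}\|eM_\textbf{n}^\alpha(\textbf{T})(x)e\|_\infty \leq 16\,|\alpha|_{W_{q,C}^{(d)}}(C^d\chi_d)^{1/p}\mu^{1/p};$$
choosing $\mu = \lambda^p/(16^p C^d\chi_d)$ then produces the stated weak type $(p,p)$ constant $4^{2+1/p}(C^d\chi_d)^{1/p}$.

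The principal obstacle I anticipate is the passage from Yeadon's corner-bound $eAe \leq \mu e$ to the analogous bound on $eA^{1/p}e$: operator monotonicity of $t\mapsto t^{1/p}$ alone does not suffice, and the Hansen--Pedersen refinement of Jensen's operator inequality is essential. A secondary subtlety is that the sector hypothesis is used twice, both to compare $|\textbf{n}|$ with $n^d$ (costing a factor $C^d$) and to ensure Brunel's reduction covers every index appearing in the multiparameter average; both effects combine into the factor $(C^d\chi_d)^{1/p}$ in the final constant.
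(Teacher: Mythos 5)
Your proposal is correct and follows essentially the same route as the paper's proof: the multiparameter form of Lemma \ref{l31}, the sector estimate combined with Brunel's Theorem \ref{t22}, Yeadon's weak type $(1,1)$ inequality applied to $x^p$, and the Jensen-type corner inequality $eAe\le\mu e\Rightarrow eA^{1/p}e\le\mu^{1/p}e$ (for which the paper invokes Equation (3.18) of \cite{mei}), which you rightly identify as the step that operator monotonicity alone cannot supply. The only cosmetic differences are that the paper performs the decompositions of $\alpha$ and $x$ sequentially (getting $\lambda/16$, then $\lambda/4$, then $\lambda$) rather than as sixteen simultaneous terms, and it records explicitly the one-line density reduction from $L_p$ to $L_1\cap\mathcal{M}$; the constants come out identically.
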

\begin{proof}

Since $L_1\cap\mathcal{M}$ is dense in the measure topology of $L_0(\mathcal{M},\tau)$ and in the norm topology of $L_p(\mathcal{M},\tau)$, without loss of generality we may assume that $x\in L_1\cap\mathcal{M}$. Since each $x\in L_1\cap\mathcal{M}$ can be written as the sum of four elements of $L_1\cap\mathcal{M}^+$ whose $L_p$-norm does not exceed that of $x$, we may first prove the claim for $x\in L_1\cap\mathcal{M}^+$.

Assume $\lambda>0$, $x\in L_1\cap\mathcal{M}^+$, $C\geq1$, and $\alpha=(\alpha_\textbf{k})_{\textbf{k}\in\mathbb{N}_0^d}\in (W_q^{(d)})^+$. Then, since $x\in\mathcal{M}^+$ and since each $T_i\in DS^+(\mathcal{M},\tau)$, it follows by Lemma \ref{l31} that, for every $\textbf{n}\in \textbf{N}_C^{(d)}$,
$$\frac{1}{|\textbf{n}|}\sum_{\textbf{k}=0}^{\textbf{n}-1}\alpha_{\textbf{k}}\textbf{T}^\textbf{k}(x)
\leq\left(\frac{1}{|\textbf{n}|}\sum_{\textbf{k}=0}^{\textbf{n}-1}\alpha_\textbf{k}^q\right)^{\frac{1}{q}}\left(\frac{1}{|\textbf{n}|}\sum_{k=0}^{n-1}\textbf{T}^\textbf{k}(x^p)\right)^{\frac{1}{p}}
\leq|\alpha|_{W_{q,C}^{(d)}}(M_\textbf{n}(\textbf{T})(x^p))^{\frac{1}{p}}.$$

If $d=1$, then without loss of generality we may use $C=1$ and $\chi_{1}=1$, so that $(C^d\chi_{1})^{\frac{1}{p}}=1$ in the constant for the maximal inequality. Also, we will set $S=T_1$, noting that $S\in DS^+(\mathcal{M},\tau)$. Finally, use $(m_{\textbf{n}})_d=\textbf{n}$ for each $\textbf{n}\in\mathbb{N}$.

If $d\geq2$, let $S\in DS^+(\mathcal{M},\tau)$ and $\chi_{d}>0$ be the operator and constant from Theorem \ref{t22} corresponding to $(T_1,...,T_d)$. If $\textbf{n}=(n_1,...,n_d)\in\textbf{N}_C^{(d)}$ and $m_{\textbf{n}}=\max\{n_1,...,n_d\}$, then we find that $m_\textbf{n}\leq Cn_j$ for each $j=1,...,d$, which implies $\frac{1}{n_j}\leq\frac{C}{m_{\textbf{n}}}$ for every such $j$. Using this, and extending the upper index on each sum to $m_{\textbf{n}}-1$ (which satisfies the operator inequality since $T_1^{k_1}\cdots T_d^{k_d}(x^p)\geq0$ for every $k_1,...,k_d\in\mathbb{N}_0$), by Theorem \ref{t22} we find that
\begin{align*}
\frac{1}{|\textbf{n}|}\sum_{\textbf{k}=0}^{\textbf{n}-1}\textbf{T}^{\textbf{k}}(x^p)
=&\frac{1}{n_1\cdots n_d}\sum_{k_1=0}^{n_1-1}\cdots\sum_{k_d=0}^{n_d-1}T_1^{k_1}\cdots T_d^{k_d}(x^p) \\
\leq&\frac{C^d}{m_{\textbf{n}}^d}\sum_{k_1=0}^{m_{\textbf{n}}-1}\cdots\sum_{k_d=0}^{m_{\textbf{n}}-1}T_1^{k_1}\cdots T_d^{k_d}(x^p) \\
\leq&\frac{C^d\chi_d}{(m_\textbf{n})_d}\sum_{j=0}^{(m_{\textbf{n}})_d-1}S^j(x^p)
=C^d\chi_d M_{(m_\textbf{n})_d}(S)(x^p).
\end{align*}

In either case, since $x\in L_1\cap\mathcal{M}^+$ implies  $x^p\in L_1(\mathcal{M},\tau)^+$, by Yeadon's weak type $(1,1)$ maximal ergodic inequality applied to $(M_n(S))_{n=1}^{\infty}$ there exists $e\in\mathcal{P}(\mathcal{M})$ satisfying both
$$\tau(e^\perp)\leq\frac{16^pC^d\chi_d\|x^p\|_1}{\lambda^p}=\frac{16^pC^d\chi_d\|x\|_p^p}{\lambda^p} \ \text{ and } \ \sup_{n\in\mathbb{N}}\|eM_n(S)(x^p)e\|_\infty\leq\frac{\lambda^p}{16^pC^d\chi_d}.$$
Fixing $n$, the fact that $M_n(S)(x^p)\in\mathcal{M}^+$ implies (from the norm inequality) that
$$eM_n(S)(x^p)e\leq\frac{\lambda^p}{16^p C^d\chi_d}e,$$ and the operator monotonicity of $0\leq t\mapsto t^{\frac{1}{p}}$ implies that
$$(eM_n(S)(x^p)e)^{\frac{1}{p}}\leq\frac{\lambda}{16(C^d\chi_d)^{\frac{1}{p}}}e.$$ Reapplying norms and taking the supremum over $\mathbb{N}$, it follows that
$$\sup_{n\in\mathbb{N}}\|(eM_n(S)(x^p)e)^{\frac{1}{p}}\|_\infty\leq\frac{\lambda}{16(C^d\chi_d)^{\frac{1}{p}}}.$$ Using Equation (3.18) of \cite{mei}, in the main inequality it follows that
$$e(M_n(S)(x^p))^{\frac{1}{p}}e\leq(eM_n(S)(x^p)e)^{\frac{1}{p}}.$$ Therefore, we find that
\begin{align*}
\left\|e\left(\frac{1}{|\textbf{n}|}\sum_{\textbf{k}=0}^{\textbf{n}-1}\alpha_{\textbf{k}}\textbf{T}^{\textbf{k}}(x)\right)e\right\|_\infty
\leq&
|\alpha|_{W_{q,C}^{(d)}}\|e(M_\textbf{n}(\textbf{T})(x^p))^{\frac{1}{p}}e\|_\infty \\
\leq&
(C^d\chi_{d})^{\frac{1}{p}}|\alpha|_{W_{q,C}^{(d)}}\left\|e(M_{(m_{\textbf{n}})_d}(S)(x^p)^{\frac{1}{p}})e\right\|_\infty \\
\leq&
(C^d\chi_{d})^{\frac{1}{p}}|\alpha|_{W_{q,C}^{(d)}}\left\|(eM_{(m_{\textbf{n}})_d}(S)(x^p)e)^{\frac{1}{p}}\right\|_\infty \\
\leq&
(C^d\chi_{d})^{\frac{1}{p}}|\alpha|_{W_{q,C}^{(d)}}\frac{\lambda}{16(C^d\chi_d)^{\frac{1}{p}}}
=|\alpha|_{W_{q,C}^{(d)}}\frac{\lambda}{16}.
\end{align*}
Dividing both sides by $|\alpha|_{C,W_q^{(d)}}$ (remembering that $\frac{0}{0}:=0$ when $\alpha=\textbf{0}$),
we obtain
$$\left\|e\left(\frac{M_\textbf{n}^\alpha(\textbf{T})}{|\alpha|_{W_{q,C}^{(d)}}}\right)(x)e\right\|_\infty\leq\frac{\lambda}{16}.$$
Finally, since $\alpha\in (W_q^{(d)})^+$ and $\textbf{n}\in \textbf{N}_C^{(d)}$ were arbitrary, it follows that
$$\sup_{(\textbf{n},\alpha)\in \textbf{N}_C^{(d)}\times (W_q^{(d)})^+}\left\|e\left(\frac{M_\textbf{n}^\alpha(\textbf{T})}{|\alpha|_{W_{q,C}^{(d)}}}\right)(x)e\right\|_\infty\leq\frac{\lambda}{16}.$$

Now, assume $\alpha\in W_q^{(d)}$, and let $\alpha_0,...,\alpha_3\in (W_q^{(d)})^+\cup\{\textbf{0}\}$ be sequences such that $\alpha=\sum_{j=0}^{3}i^j\alpha_j$ and $|\alpha_j|_{W_{q,C}^{(d)}}\leq|\alpha|_{W_{q,C}^{(d)}}$ for each $j=0,...,3$. By the triangle inequality, one finds that
\begin{align*}
\left\|e\left(\frac{M_\textbf{n}^\alpha(\textbf{T})}{|\alpha|_{W_{q,C}^{(d)}}}\right)(x)e\right\|_\infty
\leq&
\sum_{j=0}^{3}\frac{1}{|\alpha|_{W_{q,C}^{(d)}}}\left\|eM_n^{\alpha_j}(T)(x)e\right\|_\infty \\
\leq&
\sum_{j=0}^{3}\frac{|\alpha_j|_{W_{q,C}^{(d)}}}{|\alpha|_{W_{q,C}^{(d)}}}\frac{\lambda}{16}\leq\frac{\lambda}{4}.
\end{align*}

Assume now that $x\in L_1\cap\mathcal{M}$, and write $x=\sum_{j=0}^{3}i^jx_j$, where $x_j\in L_1\cap\mathcal{M}^+$ and $\|x_j\|_p\leq\|x\|_p$ for each $j=0,...,3$. Then there exists $e_0,...,e_3\in\mathcal{P}(\mathcal{M})$ such that
$$\tau(e_j^\perp)\leq\frac{16^p C^d\chi_d\|x_j\|_p^p}{\lambda^p} \ \text{ and } \ \sup_{(\textbf{n},\alpha)\in\textbf{N}_C^{(d)}\times W_q^{(d)}}\left\|e_j\left(\frac{M_\textbf{n}^\alpha(\textbf{T})}{|\alpha|_{W_{q,C}^{(d)}}}(x_j)\right)e_j\right\|_\infty\leq\frac{\lambda}{4}.$$
Let $e=\bigwedge_{j=0}^{3}e_j$. Then $$\tau(e^\perp)
\leq\sum_{j=0}^{3}\dfrac{16^p C^d\chi_d\|x_j\|_p^p}{\lambda^p}
\leq4\cdot\frac{16^pC^d\chi_d\|x\|_p^p}{\lambda^p}
=\dfrac{(4^{2+\frac{1}{p}}(C^d\chi_d)^{\frac{1}{p}})^{p} \|x\|_p^p}{\lambda^p}$$ and
\begin{align*}
\sup_{(\textbf{n},\alpha)\in\textbf{N}_C^{(d)}\times W_q^{(d)}}\left\|e\left(\frac{M_\textbf{n}^\alpha(\textbf{T})}{|\alpha|_{W_{q,C}^{(d)}}}(x)\right)e\right\|_\infty
\leq&
\sum_{j=0}^{3}\sup_{(\textbf{n},\alpha)\in\textbf{N}_C^{(d)}\times W_{q}^{(d)}}\left\|e_j\left(\frac{M_\textbf{n}^\alpha(\textbf{T})}{|\alpha|_{W_{q,C}^{(d)}}}(x_j)\right)e_j\right\|_\infty \\
\leq&\sum_{j=0}^{3}\frac{\lambda}{4}=\lambda.
\end{align*}
Since $\lambda>0$ and $x\in L_1\cap\mathcal{M}$ were arbitrary, the result follows.
\end{proof}

As far as the author is aware, this result is new in the single parameter setting. That is why we state it below separately for ease of reference and notation.

\begin{cor}
Let $(\mathcal{M},\tau)$ be a semifinite von Neumann algebra, $T\in DS^+(\mathcal{M},\tau)$, $1<p,q<\infty$ satisfy $\frac{1}{p}+\frac{1}{q}=1$, and $\alpha\in W_q$. Then the weighted averages $\Big(\frac{1}{|\alpha|_{W_{q,1}^{(1)}}}M_n^\alpha(T)\Big)_{(n,\alpha)\in\mathbb{N}\times W_q^{(1)}}$ are of weak type $(p,p)$ with constant $4^{2+\frac{1}{p}}$. In particular, the sequence $(M_n^\alpha(T))_{n=1}^{\infty}$ is of weak type $(p,p)$ for every $\alpha\in W_q$.
\end{cor}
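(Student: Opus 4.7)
The plan is to obtain this corollary as the direct $d=1$ specialization of Theorem \ref{t31}. The multiparameter theorem was already engineered so that $d=1$ requires no additional argument: the paper notes that in that case one may take $C=1$, so that $\textbf{N}_1^{(1)}=\mathbb{N}$, and the theorem explicitly allows setting $\chi_1=1$. Substituting $d=1$, $C=1$, and $\chi_d=1$ into the multiparameter constant $4^{2+\frac{1}{p}}(C^d\chi_d)^{\frac{1}{p}}$ collapses it to $4^{2+\frac{1}{p}}$, which is exactly the constant claimed. Thus the family $\bigl(|\alpha|_{W_{q,1}^{(1)}}^{-1}M_n^\alpha(T)\bigr)_{(n,\alpha)\in\mathbb{N}\times W_q^{(1)}}$ is of weak type $(p,p)$ with the stated constant. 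Note also that the lemma preceding the definition of $W_q^{(d)}$ guarantees $|\alpha|_{W_{q,1}^{(1)}}<\infty$ whenever $\alpha\in W_q^{(1)}$, so the normalization makes sense.

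For the ``in particular'' statement, I would fix a nonzero $\alpha\in W_q$ and use homogeneity in $\lambda$: given $\lambda>0$ and $x\in L_p(\mathcal{M},\tau)$, apply the weak type bound just obtained with threshold $\lambda/|\alpha|_{W_{q,1}^{(1)}}$ in place of $\lambda$. This produces a projection $e\in\mathcal{P}(\mathcal{M})$ with $\tau(e^\perp)\leq\bigl(4^{2+\frac{1}{p}}|\alpha|_{W_{q,1}^{(1)}}\bigr)^p\|x\|_p^p/\lambda^p$ and $\sup_n\|eM_n^\alpha(T)(x)e\|_\infty\leq\lambda$. Hence $(M_n^\alpha(T))_{n=1}^\infty$ is of weak type $(p,p)$ with constant $4^{2+\frac{1}{p}}|\alpha|_{W_{q,1}^{(1)}}$. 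The degenerate case $\alpha=\textbf{0}$ is trivial because every weighted average vanishes; the convention $0/0=0$ adopted in Section \ref{s2} reconciles this with the normalized formulation uniformly.

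There is essentially no obstacle here: the corollary is a pure unpacking of notation in the $d=1$ case of Theorem \ref{t31}, and all the substantive analytic work (the operator Hölder inequality of Lemma \ref{l31}, the passage through Yeadon's weak type $(1,1)$ bound, and the operator monotonicity trick used to take $p$-th roots) has already been carried out in the proof of that theorem. The only bookkeeping point worth flagging explicitly in the write-up is that in the single-parameter case the constants $C$ and $\chi_d$ drop out of the bound, justifying the cleaner constant $4^{2+\frac{1}{p}}$ appearing in the corollary.
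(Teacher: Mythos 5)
Your proposal is correct and matches the paper's (implicit) argument: the corollary is stated without separate proof precisely because it is the $d=1$, $C=1$, $\chi_1=1$ specialization of Theorem \ref{t31}, which collapses the constant to $4^{2+\frac{1}{p}}$, and the ``in particular'' clause follows by the rescaling of $\lambda$ you describe.
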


A version of this statement that is suitable to almost uniform convergence requires some additional restrictions. Namely, $\frac{1}{p}+\frac{1}{q}=1$ must be replaced by $\frac{2}{p}+\frac{1}{q}=1$, and the process uses one sequence at a time. This is due to the fact that it follows the argument in \cite[Proposition 4.1]{cl1}, and the use of Kadison's inequality and the b.u.e.m. result for $x^2\in L_{p/2}$ (when $x\in L_p^+\cap\mathcal{M}$). We obtain the result for weights in a $W_q^{(1)}$-space that is smaller then the one previously obtained for b.a.u. convergence results.

For our applications, considering only a single sequence at a time for the a.u. convergence results won't actually change any conclusions we obtain. For similar reasons, we will restrict ourselves to a single parameter in this extension. Due to all of this, we do not pursue any stronger (or multiparameter) versions of the a.u. extensions at this time.

\begin{pro}\label{p32}
Let $(\mathcal{M},\tau)$ be a semifinite von Neumann algebra, $T\in DS^+(\mathcal{M},\tau)$, $2<p<\infty$ and $1<q<\infty$ satisfying $\frac{2}{p}+\frac{1}{q}=1$, and $\alpha\in W_q^{(d)}$. 
Then the weighted averages $(M_n^\alpha(T))_{n=1}^{\infty}$ are u.e.m. at zero on $(L_p,\|\cdot\|_p)$.
\end{pro}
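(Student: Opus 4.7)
The plan is to reduce to $x \in L_p^+ \cap \mathcal{M}$ and nonnegative $\alpha = (\alpha_k)_{k=0}^\infty \in (W_q^{(1)})^+$, and then combine a Kadison-type operator inequality for the positive linear map $S_n := M_n^\alpha(T)$ with the weak type $(p/2,p/2)$ estimate supplied by Theorem \ref{t31}. The hypothesis $\frac{2}{p}+\frac{1}{q}=1$ is precisely the Hölder-conjugate condition for the pair $(p/2,q)$, which makes Theorem \ref{t31} applicable on $L_{p/2}(\mathcal{M},\tau)$ with weights in $W_q^{(1)}$.

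For the Kadison step, since $\alpha_k\geq 0$ and each $T^k$ is a positive contraction of $\mathcal{M}$, $S_n$ is a positive linear map from $\mathcal{M}$ into itself. Using $T^k(\textbf{1})\leq\textbf{1}$ together with Jensen's inequality applied to the scalar average,
$$S_n(\textbf{1}) \leq \Bigl(\frac{1}{n}\sum_{k=0}^{n-1}\alpha_k\Bigr)\textbf{1} \leq |\alpha|_{W_{q,1}^{(1)}}\textbf{1},$$
so the rescaling $S_n/|\alpha|_{W_{q,1}^{(1)}}$ is a positive sub-unital map on $\mathcal{M}$. Operator convexity of $t\mapsto t^2$ on $\mathcal{M}^+$ (Section \ref{s2}) applied to this rescaling then yields
$$S_n(x)^2 \leq |\alpha|_{W_{q,1}^{(1)}}\, S_n(x^2) \qquad \text{for all } x\in L_p^+\cap\mathcal{M},\; n\in\mathbb{N}.$$

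Next I would apply Theorem \ref{t31} with $p$ replaced by $p/2>1$ to the positive element $x^2\in L_{p/2}^+$, noting that $\|x^2\|_{p/2}=\|x\|_p^2$: for any $\lambda>0$ this produces a projection $e\in\mathcal{P}(\mathcal{M})$ with
$$\tau(e^\perp) \leq \frac{4^{p+1}\,|\alpha|_{W_{q,1}^{(1)}}^{p/2}\,\|x\|_p^p}{\lambda^{p/2}}, \qquad \sup_n \|e\,S_n(x^2)\,e\|_\infty \leq \lambda.$$
Since $S_n(x)\geq 0$, the identity $\|S_n(x)\,e\|_\infty^2=\|e\,S_n(x)^2\,e\|_\infty$ combined with the Kadison bound yields
$$\sup_n\|S_n(x)\,e\|_\infty^2 \leq |\alpha|_{W_{q,1}^{(1)}}\cdot\lambda.$$
Given $\epsilon,\delta>0$, choosing $\lambda=\delta^2/|\alpha|_{W_{q,1}^{(1)}}$ makes the right-hand side equal to $\delta^2$, and forcing $\tau(e^\perp)\leq\epsilon$ in the previous display amounts to $\|x\|_p<\gamma$ with $\gamma$ proportional to $\epsilon^{1/p}\delta/|\alpha|_{W_{q,1}^{(1)}}$, delivering the u.e.m.~estimate on $L_p^+\cap\mathcal{M}$.

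The extension to arbitrary $\alpha\in W_q^{(1)}$ and $x\in L_p\cap\mathcal{M}$ proceeds by the four-term decomposition $\alpha=\sum_{j=0}^3 i^j\alpha_j$, $x=\sum_{j=0}^3 i^j x_j$ with $\alpha_j,x_j\geq 0$ and seminorms bounded by those of $\alpha$ and $x$, absorbing a constant factor into $\gamma$; the extension to all of $L_p$ then follows from density of $L_p\cap\mathcal{M}$, mirroring the concluding density step in the proof of Theorem \ref{t31}. The main obstacle is the Kadison step: establishing $S_n(x)^2\leq c\,S_n(x^2)$ with $c\leq|\alpha|_{W_{q,1}^{(1)}}$ requires both the positivity assumption $\alpha_k\geq 0$ (so that $S_n$ is a positive map on $\mathcal{M}$) and the correct sub-unital rescaling, and it is precisely this step that forces the strengthened hypothesis $\frac{2}{p}+\frac{1}{q}=1$ in place of the b.u.e.m.~condition $\frac{1}{p}+\frac{1}{q}=1$, as well as the restriction to a single sequence $\alpha$ at a time and to the single-parameter setting.
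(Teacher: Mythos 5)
Your proof is correct and follows essentially the same route as the paper's: pass to $x^2\in L_{p/2}$, invoke Theorem \ref{t31} for the conjugate pair $(p/2,q)$, and use Kadison's inequality together with $\|M_n^\alpha(T)(x)e\|_\infty^2=\|eM_n^\alpha(T)(x)^2e\|_\infty$ to convert the bilateral estimate into a one-sided one, finishing by decomposition and density. You are in fact slightly more careful than the paper in tracking the sub-unitality constant $|\alpha|_{W_{q,1}^{(1)}}$ in the Kadison step (the paper states $M_n^\alpha(T)(x)^2\leq M_n^\alpha(T)(x^2)$ without the rescaling), but since the u.e.m.\ definition absorbs constants into the choice of $\gamma$, this does not change the argument.
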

\begin{proof}
As in Theorem \ref{t31}, assume without loss of generality that $\alpha_n\geq0$ for every $n\geq0$, so that each $M_n^\alpha(T)$ is a positive map.

Assume $\epsilon,\delta>0$. By Theorem \ref{t31}, the averages $(M_n^\alpha(T))_{n=1}^{\infty}$ are b.u.e.m. at zero on $(L_{p/2},\|\cdot\|_{p/2})$; let $\gamma>0$ to be the value corresponding to $\epsilon$ and $\delta^2$ in that definition. Let $x\in L_p\cap\mathcal{M}^+$ be such that $\|x\|_p<\sqrt{\gamma}$. Then $\|x^2\|_{p/2}=\|x\|_p^2<\gamma$, so there exists $e\in\mathcal{P}(\mathcal{M})$ with
$$\tau(e^\perp)\leq\epsilon\text{ and }\sup_{n\in\mathbb{N}}\|eM_n^\alpha(T)(x^2)e\|_\infty\leq|\alpha|_{W_q}\delta^2.$$
Since $x\in\mathcal{M}^+$ and $M_n^\alpha(T)$ is positive map, Kadison's inequality implies that
$$M_n^\alpha(T)(x)^2\leq M_n^\alpha(T)(x^2), \, \text{ so that } \, eM_n^\alpha(T)(x)^2e\leq eM_n^\alpha(T)(x^2)e.$$
Hence
$$\|M_n^\alpha(T)(x)e\|_\infty^2=\|eM_n^\alpha(T)(x)^2e\|_\infty\leq\|eM_n^\alpha(T)(x^2)e\|_\infty\leq\delta^2,$$ so that
$\sup_{n}\|M_n^\alpha(T)(x)e\|_\infty\leq\delta.$ Since $x\in L_p\cap\mathcal{M}^+$ and $\epsilon,\delta>0$ were arbitrary, the conclusion follows by \cite[Theorem 3.2 and Lemma 4.1]{li1}.
\end{proof}

\subsection{$q$-Besicovitch sequences} 

We will now use Theorem \ref{t31} to prove that $q$-Besicovitch sequences can be used as weights for individual ergodic theorems regarding noncommutative $L_p$-spaces (with $\frac{1}{p}+\frac{1}{q}=1$). The process is standard, but we include it for the sake of completeness.

\begin{pro}\label{p33}
Let $(\mathcal{M},\tau)$ be a semifinite von Neumann algebra, $d\in\mathbb{N}$ (respectively, $d=1$), $\textbf{T}\in DS^+(\mathcal{M},\tau)^d$, and $x\in L_1(\mathcal{M},\tau)\cap\mathcal{M}$. Let $\mathcal{A}\subset W_1^{(d)}$, and let $\mathcal{C}\subset W_1^{(d)}$ denote the $W_1^{(d)}$-seminorm closure of $\mathcal{A}$. Then, if $(M_{\textbf{n}_k}^\alpha(\textbf{T})(x))_{k=0}^{\infty}$ converges b.a.u. (respectively, a.u.) for every $\alpha\in\mathcal{A}$ and every sequence $(\textbf{n}_k)_{k=0}^{\infty}$ in a sector of $\mathbb{N}^d$ with $\textbf{n}_k\to\infty$, then it does so for every $\alpha\in\mathcal{C}$.
\end{pro}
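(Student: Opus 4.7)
The plan is to show the Cauchy property for the images of $x$ under the weighted averages along a $W_1^{(d)}$-seminorm approximating sequence, exploiting the fact that $x$ lies in $\mathcal{M}$. Fix $x \in L_1\cap\mathcal{M}$, $\alpha\in\mathcal{C}$, a sector $\textbf{N}_C^{(d)}$, and a sequence $(\textbf{n}_k)\subset\textbf{N}_C^{(d)}$ with $\textbf{n}_k\to\infty$. Choose $(\alpha^{(j)})\subset\mathcal{A}$ with $\|\alpha-\alpha^{(j)}\|_{W_1^{(d)}}\to 0$. The key observation is that since each $T_i$ is an $L_\infty$-contraction,
\begin{equation*}
\|M_{\textbf{n}}^\beta(\textbf{T})(x)\|_\infty \leq \|x\|_\infty\cdot\frac{1}{|\textbf{n}|}\sum_{\textbf{l}=0}^{\textbf{n}-1}|\beta_\textbf{l}| \quad\text{for all }\beta\in W_1^{(d)},\ \textbf{n}\in\mathbb{N}^d,
\end{equation*}
so that $\limsup_{k}\|M_{\textbf{n}_k}^{\alpha-\alpha^{(j)}}(\textbf{T})(x)\|_\infty\leq\|x\|_\infty\cdot\|\alpha-\alpha^{(j)}\|_{W_1^{(d)}}$, which can be made arbitrarily small by taking $j$ large.

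Given $\epsilon>0$, use the hypothesis to obtain, for each $j$, a projection $e_j\in\mathcal{P}(\mathcal{M})$ with $\tau(e_j^\perp)\leq\epsilon/2^{j+1}$ such that $\bigl(e_j M_{\textbf{n}_k}^{\alpha^{(j)}}(\textbf{T})(x) e_j\bigr)_k$ is Cauchy in $\|\cdot\|_\infty$ (in the a.u.\ case, drop the left $e_j$). Set $e:=\bigwedge_j e_j$, so $\tau(e^\perp)\leq\epsilon$. For any $k,k'$ and $j$, insert and subtract $M_{\textbf{n}_k}^{\alpha^{(j)}}$ and $M_{\textbf{n}_{k'}}^{\alpha^{(j)}}$ to obtain the three-term decomposition
\begin{align*}
e\bigl(M_{\textbf{n}_k}^\alpha-M_{\textbf{n}_{k'}}^\alpha\bigr)(\textbf{T})(x)e
=\;&eM_{\textbf{n}_k}^{\alpha-\alpha^{(j)}}(\textbf{T})(x)e
+e\bigl(M_{\textbf{n}_k}^{\alpha^{(j)}}-M_{\textbf{n}_{k'}}^{\alpha^{(j)}}\bigr)(\textbf{T})(x)e \\
&-eM_{\textbf{n}_{k'}}^{\alpha-\alpha^{(j)}}(\textbf{T})(x)e.
\end{align*}
The first and third terms are controlled by the norm bound above, and the middle term is handled by $e\leq e_j$, which gives $\|e\bigl(M_{\textbf{n}_k}^{\alpha^{(j)}}-M_{\textbf{n}_{k'}}^{\alpha^{(j)}}\bigr)(\textbf{T})(x)e\|_\infty\leq\|e_j\bigl(M_{\textbf{n}_k}^{\alpha^{(j)}}-M_{\textbf{n}_{k'}}^{\alpha^{(j)}}\bigr)(\textbf{T})(x)e_j\|_\infty$. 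Choosing $j$ first so that $2\|x\|_\infty\|\alpha-\alpha^{(j)}\|_{W_1^{(d)}}$ is small and then taking $k,k'$ sufficiently large shows that $\bigl(eM_{\textbf{n}_k}^\alpha(\textbf{T})(x)e\bigr)_k$ is Cauchy, and hence convergent, in the (complete) algebra $e\mathcal{M}e$. This yields the desired b.a.u.\ convergence.

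The a.u.\ case proceeds identically with the left-hand $e$ removed throughout, using that the norm bound does not require any projection on the left. The main obstacle is organizational rather than technical: one must fix $\epsilon$, then select $j$ (controlling the outer terms via the seminorm), and only afterwards pass to large $k,k'$ (controlling the middle term via the Cauchy property on $e_j$); in particular, one must verify that the $\limsup$ definition of $\|\cdot\|_{W_1^{(d)}}$ is compatible with the sector condition $\textbf{n}_k\in\textbf{N}_C^{(d)}$, which is immediate since $\textbf{n}_k\to\infty$ suffices to activate the limsup regardless of $C$.
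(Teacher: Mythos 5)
Your proof is correct and follows essentially the same route as the paper: approximate the weight in the $W_1^{(d)}$-seminorm and exploit the bound $\|M_{\textbf{n}_k}^{\alpha-\beta}(\textbf{T})(x)\|_\infty\leq\|x\|_\infty\cdot\frac{1}{|\textbf{n}_k|}\sum_{\textbf{l}=0}^{\textbf{n}_k-1}|\alpha_{\textbf{l}}-\beta_{\textbf{l}}|$, valid for all large $k$ because $\textbf{n}_k\to\infty$ activates the $\limsup$ defining the seminorm. The only step you elide is the passage from ``for every $\epsilon>0$ there is $e$ with $\tau(e^\perp)\leq\epsilon$ making $\bigl(eM_{\textbf{n}_k}^{\alpha}(\textbf{T})(x)e\bigr)_k$ norm-Cauchy'' to b.a.u.\ convergence toward a \emph{single} element of $L_0(\mathcal{M},\tau)$ (the limits produced for different $\epsilon$ must be glued, via completeness of $L_0$ in the measure topology); this is precisely the standard lemma the paper invokes at the same point (\cite[Lemma 4.3]{cls}), so your hand-rolled Cauchy argument is an acceptable substitute for citing it, provided you add that one sentence.
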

\begin{proof}
Assume $\alpha=(\alpha_\textbf{n})_{\textbf{n}\in\mathbb{N}_0^d}\in\mathcal{C}$ and $\epsilon>0$. Then there exists $\beta=(\beta_\textbf{n})_{\textbf{n}\in\mathbb{N}_0^d}\in\mathcal{A}$ such that
$$\limsup_{\textbf{n}\to\infty}\frac{1}{|\textbf{n}|}\sum_{\textbf{k}=0}^{\textbf{n}-1}|\alpha_\textbf{k}-\beta_\textbf{k}|=\|\alpha-\beta\|_{W_1^{(d)}}<\epsilon.$$
Let $N\in\mathbb{N}_0$ be such that this inequality holds for every $\textbf{n}=(n_1,...,n_d)$ with $\min\{n_1,...,n_d\}\geq N$. Assume $(\textbf{n}_k)_{k=0}^{\infty}\subset\textbf{N}_C^{(d)}$ satisfies $\textbf{n}_k\to\infty$ as $k\to\infty$ for some $C\geq1$. Then, since $\textbf{n}_k\to\infty$, there exists $N_\epsilon$ such that this inequality holds for every $k\geq N_\epsilon$, and so we also have that
$$\|M_{\textbf{n}_k}^\alpha(\textbf{T})(x)-M_{\textbf{n}_k}^\beta(\textbf{T})(x)\|_\infty
\leq\frac{1}{|\textbf{n}_k|}\sum_{\textbf{k}=0}^{\textbf{n}_k-1}|\alpha_\textbf{k}-\beta_\textbf{k}|\|T^\textbf{k}(x)\|_\infty
\leq\epsilon\|x\|_\infty.$$
Since $(M_{\textbf{n}_k}^\beta(\textbf{T})(x))_{k=0}^{\infty}$ converges b.a.u. (or a.u.) as $k\to\infty$ by assumption, and since $\epsilon>0$ was arbitrary, it follows by that $(M_{\textbf{n}_k}^\alpha(\textbf{T})(x))_{k=0}^{\infty}$ converges b.a.u. (respectively, a.u.) as well by \cite[Lemma 4.3]{cls}.
\end{proof}

Since $L_1\cap\mathcal{M}$ is dense in $L_p(\mathcal{M},\tau)$ for every $1\leq p<\infty$, we obtain the following by applying Propositions \ref{p22} and \ref{p33}.

\begin{cor}\label{c31}
Let $(\mathcal{M},\tau)$ be a semifinite von Neumann algebra, $d\in\mathbb{N}$ (respectively, $d=1$) $\textbf{T}=(T_1,...,T_d)\in DS^+(\mathcal{M},\tau)^d$, and $1<p,q<\infty$ (respectively, $2<p<\infty$).
Let $\mathcal{A}\subset W_q^{(d)}$, and let $\mathcal{C}\subset W_q^{(d)}$ be the $W_q^{(d)}$-seminorm closure of $\mathcal{A}$. If $\frac{1}{p}+\frac{1}{q}=1$ (respectively, $\frac{2}{p}+\frac{1}{q}=1$) and if $(M_{\textbf{n}_k}^\alpha(\textbf{T})(x))_{k=0}^{\infty}$ converges b.a.u. (respectively, a.u.) for every $\alpha\in\mathcal{A}$, $x\in L_p(\mathcal{M},\tau)$, and sequence $(\textbf{n}_k)_{k=0}^{\infty}$ in a sector of $\mathbb{N}_0^d$ with $\textbf{n}_k\to\infty$, then it does so for every $\alpha\in\mathcal{C}$ and $x\in L_p(\mathcal{M},\tau)$.
\end{cor}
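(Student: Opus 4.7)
The plan is to combine Proposition \ref{p33}, which handles the passage from $\mathcal{A}$ to its closure $\mathcal{C}$ on the dense subspace $L_1 \cap \mathcal{M}$, with Proposition \ref{p22}, which extends the convergence from $L_1 \cap \mathcal{M}$ to all of $L_p$ for each fixed $\alpha \in \mathcal{C}$.

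First I would observe that the $W_q^{(d)}$-seminorm closure of $\mathcal{A}$ is contained in its $W_1^{(d)}$-seminorm closure, since $W_q^{(d)} \hookrightarrow W_1^{(d)}$ is a contraction (an immediate consequence of H\"older's inequality on each finite block $[\textbf{0},\textbf{n}-1]$). Consequently, for every $\alpha \in \mathcal{C}$ and every $x \in L_1 \cap \mathcal{M}$, Proposition \ref{p33} applied to $(\mathcal{A}, \mathcal{C})$ viewed inside $W_1^{(d)}$ yields b.a.u.\ (resp.\ a.u.) convergence of $M_{\textbf{n}_k}^\alpha(\textbf{T})(x)$ along every sectorial sequence $\textbf{n}_k \to \infty$.

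Next, I would fix $\alpha \in \mathcal{C}$ and a sectorial sequence $(\textbf{n}_k) \subset \textbf{N}_C^{(d)}$ with $\textbf{n}_k \to \infty$. Theorem \ref{t31} (resp.\ Proposition \ref{p32}) shows that the family $(M_{\textbf{n}}^\alpha(\textbf{T}))_{\textbf{n} \in \textbf{N}_C^{(d)}}$ is of weak type $(p,p)$ on $L_p$, and hence b.u.e.m.\ (resp.\ u.e.m.) at zero on $(L_p, \|\cdot\|_p)$; this property is inherited by the subfamily indexed by $k \mapsto \textbf{n}_k$. By Proposition \ref{p22}, the set of $x \in L_p$ for which $(M_{\textbf{n}_k}^\alpha(\textbf{T})(x))_{k=0}^{\infty}$ converges b.a.u.\ (resp.\ a.u.) is a closed subspace of $L_p$. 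By the previous paragraph this subspace contains $L_1 \cap \mathcal{M}$, which is dense in $L_p$, so it must equal all of $L_p$.

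I do not anticipate a substantial obstacle here, as both ingredients are already in place; the argument is essentially the standard density/closedness combination. The only point requiring care is the matching of closures: $\mathcal{C}$ is defined as the $W_q^{(d)}$-seminorm closure of $\mathcal{A}$, while Proposition \ref{p33} is phrased using the $W_1^{(d)}$-seminorm closure, and the contractive embedding $W_q^{(d)} \hookrightarrow W_1^{(d)}$ absorbs this discrepancy. The quantification over all sectorial sequences causes no issue, since Proposition \ref{p22} is applied separately for each fixed $(\textbf{n}_k)$, and each of the resulting closed subspaces turns out to be all of $L_p$.
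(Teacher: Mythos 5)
Your proposal is correct and follows essentially the same route as the paper, which proves the corollary in one line by combining Proposition \ref{p33} (passage from $\mathcal{A}$ to its closure on $L_1\cap\mathcal{M}$) with Proposition \ref{p22} and the density of $L_1\cap\mathcal{M}$ in $L_p$, the requisite b.u.e.m.\ (resp.\ u.e.m.) property coming from Theorem \ref{t31} (resp.\ Proposition \ref{p32}). Your explicit remark that the $W_q^{(d)}$-seminorm closure sits inside the $W_1^{(d)}$-seminorm closure, so that Proposition \ref{p33} applies to all of $\mathcal{C}$, is a detail the paper leaves implicit but is exactly the right justification.
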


It was shown in \cite{cls,cl1,cl2} that, given a semifinite von Neumann algebra $(M,\tau)$ with a separable predual, the averages $(M_n^\alpha(T)(x))_{n=1}^{\infty}$ converge b.a.u. for every $x\in L_p(\mathcal{M},\tau)$, $T\in DS^+(\mathcal{M},\tau)$, and bounded Besicovitch sequence $\alpha\in B_\infty^{(d)}$, where $1\leq p<\infty$, and that the convergence even occurs a.u. when $2\leq p<\infty$. For multiparameter averages, the same holds b.a.u. for every $1<p<\infty$ (see \cite{mmt}).


\begin{teo}\label{t32}
Let $(\mathcal{M},\tau)$ be a semifinite von Neumann algebra with a separable predual, $d\in\mathbb{N}$, $\textbf{T}=(T_1,...,T_d)\in DS^+(\mathcal{M},\tau)^d$ consist of commuting operators, $1<p,q<\infty$ satisfy $\frac{1}{p}+\frac{1}{q}=1$, and $\alpha\in B_q^{(d)}$ be a $q$-Besicovitch sequence. 
Then for every $x\in L_p(\mathcal{M},\tau)$, there exists $x_\alpha\in L_p(\mathcal{M},\tau)$ such that the weighted averages $(M_{\textbf{n}_k}^\alpha(\textbf{T})(x))_{k=0}^{\infty}$ converge b.a.u. to $x_\alpha$ along any sequence $(\textbf{n}_k)_{k=0}^{\infty}$ in a sector of $\mathbb{N}_0^d$ that tends to $\infty$ as $k\to\infty$. 
\end{teo}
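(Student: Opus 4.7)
The strategy is to reduce the claim to the case where the weight is a trigonometric polynomial, and then invoke the known bounded Besicovitch result cited just before the theorem (that is, \cite{mmt}). The machinery of Corollary \ref{c31} is precisely tailored for such a reduction, because by the very definition of $B_q^{(d)}$ the space $\mathcal{T}^{(d)}$ of trigonometric polynomials is dense in $B_q^{(d)}$ with respect to the $W_q^{(d)}$-seminorm, and Corollary \ref{c31} transfers b.a.u. convergence from a set of weights to its $W_q^{(d)}$-closure (using Theorem \ref{t31} as the hidden engine, through the b.u.e.m.\ property that weak type $(p,p)$ provides via Proposition \ref{p22}). So the entire argument boils down to two ingredients: the density of $\mathcal{T}^{(d)}$ in $B_q^{(d)}$, and b.a.u. convergence at the level of trigonometric polynomials.

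\textbf{Step 1: Reduction to trigonometric polynomials.} I would first record that $B_q^{(d)}$ is, by definition, the $W_q^{(d)}$-seminorm closure of $\mathcal{T}^{(d)}$, so that $\mathcal{T}^{(d)} \subset B_\infty^{(d)} \cap B_q^{(d)}$. Applying Corollary \ref{c31} with $\mathcal{A} = \mathcal{T}^{(d)}$ and $\mathcal{C} = B_q^{(d)}$, it then suffices to show that for every trigonometric polynomial $\alpha$, every $x \in L_p(\mathcal{M},\tau)$, and every sector-constrained sequence $(\textbf{n}_k)_{k=0}^{\infty} \subset \textbf{N}_C^{(d)}$ with $\textbf{n}_k \to \infty$, the weighted averages $(M_{\textbf{n}_k}^\alpha(\textbf{T})(x))_{k=0}^{\infty}$ converge b.a.u.

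\textbf{Step 2: Convergence for trigonometric polynomial weights.} A trigonometric polynomial is bounded, so $\mathcal{T}^{(d)} \subset B_\infty^{(d)}$. The paragraph preceding the theorem records that for commuting positive Dunford-Schwartz operators on a semifinite von Neumann algebra with separable predual, bounded Besicovitch weights give b.a.u. convergence of the multiparameter weighted averages for every $x \in L_p$, $1<p<\infty$; the multiparameter version is exactly the content of \cite{mmt}. Since the multiparameter convergence $\textbf{n} \to \infty$ is phrased with $\min\{n_1,\dots,n_d\} \geq N$, it automatically implies convergence along any subsequence $\textbf{n}_k \to \infty$, and in particular along any sector-constrained sequence. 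Combined with Step 1, this establishes b.a.u. convergence of $(M_{\textbf{n}_k}^\alpha(\textbf{T})(x))_{k=0}^{\infty}$ for all $\alpha \in B_q^{(d)}$ and all $x \in L_p(\mathcal{M},\tau)$.

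\textbf{Step 3: The limit lies in $L_p$.} Finally, one has to verify that the b.a.u. limit, which a priori is only a $\tau$-measurable operator in $L_0(\mathcal{M},\tau)$, actually lies in $L_p(\mathcal{M},\tau)$. For this I would use the norm bound arising from Theorem \ref{t31} (or, equivalently, from applying Lemma \ref{l31} directly and using that each $T_i$ is an $L_p$-contraction) to see that the family $(M_{\textbf{n}_k}^\alpha(\textbf{T})(x))_{k}$ is uniformly bounded in $L_p$ by a multiple of $|\alpha|_{W_{q,C}^{(d)}} \|x\|_p$. Since b.a.u. convergence implies convergence in measure, a standard Fatou-type lower semicontinuity of the $L_p$-norm on $L_0(\mathcal{M},\tau)$ then forces $x_\alpha \in L_p(\mathcal{M},\tau)$.

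\textbf{Main obstacle.} There is no genuinely hard obstacle here beyond carefully assembling the pieces; the substantive work was done in establishing Theorem \ref{t31} (the weak type $(p,p)$ maximal inequality) and in the already-known bounded Besicovitch theorem. The main point of care is the correct bookkeeping in invoking Corollary \ref{c31}: one must know that the $W_q^{(d)}$-closure of $\mathcal{T}^{(d)}$ is $B_q^{(d)}$ (immediate from the definition) and that the convergence hypothesis of that corollary holds for sector-restricted sequences when $\alpha$ is a trigonometric polynomial, which is why the unrestricted multiparameter result of \cite{mmt} suffices.
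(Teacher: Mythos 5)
Your Steps 1 and 2 are exactly the paper's argument: reduce to $\mathcal{A}=\mathcal{T}^{(d)}$ via Corollary \ref{c31} (whose hypotheses are supplied by Theorem \ref{t31} through Proposition \ref{p22}), and get convergence for trigonometric polynomial weights from the multiparameter bounded-weight theorem of \cite{mmt}. Your Step 3 replaces the paper's citation of \cite[Proposition 4.10]{ob2} with a uniform $L_p$-bound plus Fatou-type lower semicontinuity of the $L_p$-norm under convergence in measure; that is a legitimate alternative (the scalar H\"{o}lder inequality already gives $\|M_{\textbf{n}}^\alpha(\textbf{T})(x)\|_p\leq|\alpha|_{W_{q,C}^{(d)}}\|x\|_p$, so the bound you need does hold).

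There is, however, one genuine omission. The theorem asserts the existence of a \emph{single} $x_\alpha\in L_p$ that is the b.a.u.\ limit along \emph{every} sector-constrained sequence tending to $\infty$. Your argument produces, for each fixed sequence $(\textbf{n}_k)_{k=0}^{\infty}\subset\textbf{N}_{C}^{(d)}$, a limit that a priori may depend on the sequence (and on the sector constant $C$); nothing in Steps 1--3 rules this out, since Corollary \ref{c31} is applied one sequence at a time. The paper closes this by an interleaving argument: given two sector sequences $(\textbf{n}_k)\subset\textbf{N}_{C_1}^{(d)}$ and $(\textbf{m}_j)\subset\textbf{N}_{C_2}^{(d)}$ with limits $x_\alpha$ and $\widehat{x}$, the merged sequence $(\textbf{n}_0,\textbf{m}_0,\textbf{n}_1,\textbf{m}_1,\dots)$ lies in $\textbf{N}_{\max\{C_1,C_2\}}^{(d)}$ and still tends to $\infty$, so its weighted averages converge b.a.u.\ (hence in measure) to some limit of which both $x_\alpha$ and $\widehat{x}$ are subsequential limits; since the measure topology on $L_0(\mathcal{M},\tau)$ is Hausdorff, $x_\alpha=\widehat{x}$. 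You should add this (or an equivalent) step; without it you have only proved convergence along each sector sequence separately, which is a strictly weaker statement than the one claimed.
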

\begin{proof}
For a fixed sequence $(\textbf{n}_k)_{k=0}^{\infty}$ tending to $\infty$ in a sector of $\mathbb{N}_0^d$, we know by \cite[Theorem 3.1]{mmt} that the averages $(M_{\textbf{n}_k}^\alpha(\textbf{T})(x))_{k=1}^{\infty}$ converge b.a.u. for every $x\in L_1\cap\mathcal{M}$ and every $\alpha\in \mathcal{T}^{(d)}$. 
Since, by definition, the closure of $\mathcal{T}^{(d)}$ with respect to the $W_q^{(d)}$-seminorm is equal to $B_q$, the b.a.u. convergence is obtained by Corollary \ref{c31}. The limit $x_\alpha$ is in $L_p(\mathcal{M},\tau)$ by \cite[Proposition 4.10]{ob2}.

With $x\in L_p$ fixed, let $(\textbf{m}_j)_{j=0}^{\infty}$ be another sequence that tends to $\infty$ in a sector of $\mathbb{N}_0^d$, and suppose that $M_{\textbf{m}_j}^{\alpha}(\textbf{T})(x)\to\widehat{x}$ b.a.u. as $j\to\infty$. If $C_1,C_2>0$ are such that $(\textbf{n}_k)_{k=0}^{\infty}\subset \textbf{N}_{C_1}^{(d)}$ and $(\textbf{m}_j)_{j=0}^{\infty}\subset \textbf{N}_{C_2}^{(d)}$, then by letting $C=\max\{C_1,C_2\}$ we see that the sequence $\textbf{w}=(\textbf{w}_m)_{m=0}^{\infty}=(\textbf{n}_0,\textbf{m}_0,\textbf{n}_1,\textbf{m}_1,...)$ is contained in $\textbf{N}_C^{(d)}$, so that it remains in a sector of $\mathbb{N}_0^d$ as well. Therefore, since $x_\alpha$ and $\widehat{x}$ are both subsequential limit points of $(M_{\textbf{w}_m}(\textbf{T})(x))_{m=0}^{\infty}$ with respect to b.a.u. convergence, and so with respect to convergence in measure, it must be that $\widehat{x}=x_\alpha$ since $L_0$ is Hausdorff. Hence, the limit $x_\alpha$ is independent of $(\textbf{n}_j)_{j=1}^{\infty}\subset \textbf{N}_C^{(d)}$.
\end{proof}

This result is new in the single parameter case as well, so we write it separately. 

\begin{cor}
Let $(\mathcal{M},\tau)$ be a semifinite von Neumann algebra with a separable predual, $T\in DS^+(\mathcal{M},\tau)$, $1<p,q<\infty$, and $\alpha\in B_q^{(1)}$. If $\frac{1}{p}+\frac{1}{q}=1$, then for every $x\in L_p(\mathcal{M},\tau)$ the weighted averages $(M_n^\alpha(T)(x))_{n=1}^{\infty}$ converge b.a.u. to some $x_\alpha\in L_p(\mathcal{M},\tau)$. If $2<p<\infty$ and $\frac{2}{p}+\frac{1}{q}=1$, then the convergence occurs a.u.
\end{cor}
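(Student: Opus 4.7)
The plan is to extract the single-parameter case of Theorem~\ref{t32} for the b.a.u.\ assertion, and then assemble the a.u.\ assertion from Proposition~\ref{p32} together with the approximation of $B_q^{(1)}$ by trigonometric polynomials in the $W_q^{(1)}$-seminorm.

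First I would handle the b.a.u.\ case with $\frac{1}{p}+\frac{1}{q}=1$. Setting $d=1$ in Theorem~\ref{t32} yields exactly the required b.a.u.\ convergence on $L_p(\mathcal{M},\tau)$ for any $\alpha\in B_q^{(1)}$, together with the limit $x_\alpha\in L_p(\mathcal{M},\tau)$. As noted in Section~\ref{s2}, in the single-parameter case we may take $C=1$ and $\textbf{N}_1^{(1)}=\mathbb{N}$, so the sector restriction is vacuous and any sequence $n_k\to\infty$ qualifies. This gives the standard formulation $M_n^\alpha(T)(x)\to x_\alpha$ b.a.u.\ as $n\to\infty$.

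For the a.u.\ case with $2<p<\infty$ and $\frac{2}{p}+\frac{1}{q}=1$, I would argue by density. By definition, $B_q^{(1)}$ is the $W_q^{(1)}$-seminorm closure of the space $\mathcal{T}^{(1)}$ of trigonometric polynomial sequences. Any $\alpha\in\mathcal{T}^{(1)}$ is bounded and hence lies in $B_\infty^{(1)}$, so by the results of \cite{cls,cl1,cl2} recalled before Theorem~\ref{t32}, the averages $(M_n^\alpha(T)(x))_{n=1}^{\infty}$ converge a.u.\ for every $x\in L_p$ (this uses $p\geq 2$). On the other hand, Proposition~\ref{p32} establishes that for each $\alpha\in W_q^{(1)}$ (in particular each $\alpha\in B_q^{(1)}$), the averages $(M_n^\alpha(T))_{n=1}^{\infty}$ are u.e.m.\ at zero on $(L_p,\|\cdot\|_p)$. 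This u.e.m.\ property is precisely what is required to invoke the a.u.\ case of Corollary~\ref{c31} in its single-parameter form with the exponent condition $\frac{2}{p}+\frac{1}{q}=1$.

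Applying Corollary~\ref{c31} with $\mathcal{A}=\mathcal{T}^{(1)}$ and $\mathcal{C}=B_q^{(1)}$ then propagates a.u.\ convergence from trigonometric polynomials to every $\alpha\in B_q^{(1)}$, for every $x\in L_p$. Identification of the limit $x_\alpha$ as an element of $L_p(\mathcal{M},\tau)$ is inherited from \cite[Proposition 4.10]{ob2}, just as in the proof of Theorem~\ref{t32}. The main conceptual work has already been done in establishing Theorem~\ref{t31}, Proposition~\ref{p32}, and Corollary~\ref{c31}; the only real obstacle here is bookkeeping, namely matching the $\frac{2}{p}+\frac{1}{q}=1$ version of Corollary~\ref{c31} to the available a.u.\ convergence for bounded Besicovitch (in particular, trigonometric) sequences on $L_p$ with $p\geq 2$, which is ensured by construction.
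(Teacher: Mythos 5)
Your proposal is correct and matches the paper's route: the b.a.u.\ assertion is exactly the $d=1$ case of Theorem \ref{t32}, and the a.u.\ assertion is obtained by combining the known a.u.\ convergence for bounded Besicovitch (hence trigonometric polynomial) weights on $L_p$ with $p>2$, the u.e.m.\ property from Proposition \ref{p32}, and the closure argument of Corollary \ref{c31} applied to $\mathcal{A}=\mathcal{T}^{(1)}$ with $W_q^{(1)}$-closure $B_q^{(1)}$. No gaps.
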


\begin{rem}\label{r31}
It should be noted that the assumptions made above are not new in the context of multiparameter ergodic theorems. Namely, it is known that the operators $T_1,...,T_d$ do not need to commute for the a.e./(b.)a.u. convergence of the unweighted averages $(M_\textbf{n}(\textbf{T})(x))_{\textbf{n}\in\mathbb{N}_0^d}$ when $x\in L_p$ with $1<p<\infty$ in either the commutative or noncommutative setting (see \cite{jo} for the commutative setting and \cite{jx} for the noncommutative setting). However, in the commutative case, when $x\in L_1$ there are known counterexamples for this general setting, but the desired convergence does occur when the assumptions that we made throughout are satisfied; the proofs of the results end up also being similar as well.

We note that many improvements can be made to these results in the commutative setting. There, the convergence can be studied without involving sectors, and the $T_j$'s need not commute. Furthermore, each $T_j\in DS^+(\mathcal{M},\tau)$ can be replaced by $T_j:L_p(\mathcal{M},\tau)\to L_p(\mathcal{M},\tau)$ positive contractions only (without it necessarily being defined on some other $L_r$-spaces).

The proof of this version of the commutative result in \cite{jo} involves the dilation of operators. In the noncommutative setting, however, it was shown that this can't necessarily be done \cite{jlm}. It turns out that there exists a finite von Neumann algebra $(\mathcal{M},\tau)$ and positive contraction $T:L_p(\mathcal{M},\tau)\to L_p(\mathcal{M},\tau)$ that does not admit any dilation to an isometry on $L_p(\mathcal{N},\rho)$ for any semifinite von Neumann algebra $(\mathcal{N},\rho)$. Therefore, in order to prove this more general statement, one most likely will need to develop some new arguments.

We will note, however, that the maximal inequalities (and corresponding convergence results) in \cite{jx} and \cite{mmt} also do not need to make the assumptions about sections or the $T_j$'s commuting, though they still need each $T_j\in DS^+(\mathcal{M},\tau)$ (and \cite{mmt} also requires bounded weights to be used). 
\end{rem}

\subsection{$q$-Hartman sequences}

We now discuss another result that may be improved by Theorem \ref{t31}. For the rest of this paper we will restrict ourselves to the one parameter case $d=1$. We will also need to introduce some more notation.

Given a semifinite von Neumann algebra $(\mathcal{M},\tau)$, $1<r<\infty$, and $T\in DS^+(\mathcal{M},\tau)$, by the Jacobs-de Leeuw-Glicksberg decomposition (see \cite[Chapter 2.2]{kr}) of the space one may write
$$L_r(\mathcal{M},\tau)=\overline{\text{span}(\mathcal{U}_r(T))}\oplus\mathcal{V}_r(T),$$
where the closure is with respect to the norm of $L_r$ and
\begin{align*}
\mathcal{U}_r(T)=&\Big\{x\in L_r(\mathcal{M},\tau):T(x)=\lambda x\text{ for some }\lambda\in\mathbb{T}\Big\}, \\
\mathcal{V}_r(T)=&\Big\{x\in L_r(\mathcal{M},\tau):T^{n_j}(x)\to0\text{ weakly for some increasing }(n_j)_{j=0}^{\infty}\subseteq\mathbb{N}_0\Big\}.
\end{align*}
If $\mathcal{W}\subseteq W_1^{(1)}$, write
\begin{align*}
bWW_r^T(\mathcal{W}):=
\Big\{x\in  L_r(&\mathcal{M},\tau):\forall\epsilon>0\ \exists e\in\mathcal{P}(\mathcal{M})\text{ such that } 
\tau(e^\perp)\leq\epsilon \text{ and } \\
&(eM_n^\alpha(T)(x)e)_{n=1}^{\infty}\text{ converges in }\mathcal{M}\text{ for each }\alpha\in\mathcal{W}\Big\}.
\end{align*}

This latter space is the set of operators which satisfy a Wiener-Wintner type ergodic theorem on $L_r$ with respect to $T$ for weights in $\mathcal{W}$. In particular, when $x\in bWW_r^T(\mathcal{W})$ the averages $M_n^\alpha(T)(x)$ converge b.a.u. for every $\alpha\in\mathcal{W}$, with the projection being independent of $\alpha$ (though it may still depend on $\mathcal{W}$ itself).

In \cite{ob2} it was shown that if $T^n(x)\to0$ b.a.u. for every $x\in\mathcal{V}_r(T)$ for some $1<r<\infty$ and if $(T^n)_{n=0}^{\infty}$ is b.u.e.m. at zero on $(L_r,\|\cdot\|_r)$, then a Wiener-Wintner type result holds on $L_r(\mathcal{M},\tau)$ and with weights in $W_{1^+}^{(1)}\cap H^{(1)}$. However, for any other $p\neq r$ one could only guarantee the convergence on $L_p$ when the weights were in $W_\infty^{(1)}\cap H^{(1)}$ using the methods of that paper (unless the same assumptions also held on $L_p$ as well). 

These two assumptions on the iterates of $T$ were justified in \cite{ob2} through a large list of examples and easier to check conditions. For example, if the restriction of $T\in DS^+(\mathcal{M},\tau)$ to $L_2$ is self adjoint, or if $T$ is normal as a Hilbert space operator on $L_2(\mathcal{M},\tau)$ and $\sigma(T^n|_{L_2})\subset[0,1]$ for some $n\in\mathbb{N}$, then both assumptions hold for every $1<r<\infty$. In these cases Theorem \ref{t33} below doesn't say anything new as all sequences in $W_{1^+}^{(1)}\cap H^{(1)}$ can be used for each $L_p$, $1<p<\infty$. 

The operators considered in Theorems 2.7 and 4.6 of \cite{be} were only shown to satisfy these assumptions on $L_p(\mathcal{M},\tau)$ when $p=2$. Hence, the b.a.u. version of the Wiener-Wintner type ergodic theorem in \cite{ob2} allows weights in $W_{1^+}^{(1)}\cap H^{(1)}$ on the corresponding noncommutative $L_2$-space, while only weights in $W_\infty^{(1)}\cap H^{(1)}$ may be used on the other noncommutative $L_p$-spaces ($p\neq2$). The next result will improve this case to allow weights in $W_q^{(1)}\cap H^{(1)}$ for other $L_p$-spaces.

\begin{teo}\label{t33}
Let $(\mathcal{M},\tau)$ be a semifinite von Neumann algebra, $1<r<\infty$, and $T\in DS^+(\mathcal{M},\tau)$ be such that $(T^n)_{n=0}^{\infty}$ is b.u.e.m. at zero on $(L_r,\|\cdot\|_r)$ and $T^n(x)\to0$ b.a.u. for every $x\in\mathcal{V}_r(T)$. 
Then $L_r(\mathcal{M},\tau)=bWW_r^T(W_{1^+}^{(1)}\cap H^{(1)})$. Furthermore, if $1<p,q<\infty$ satisfies $\frac{1}{p}+\frac{1}{q}=1$, 
then $L_p(\mathcal{M},\tau)=bWW_p^T(W_q^{(1)}\cap H^{(1)})$.
\end{teo}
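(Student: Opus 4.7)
The plan is to combine the first conclusion of the theorem (which is the existing result from \cite{ob2} under our hypotheses on $(T^n)_{n=0}^{\infty}$) with the new weak type $(p,p)$ maximal inequality from Theorem \ref{t31} via a Banach-principle-style extension from a common dense core. The first conclusion follows by invoking \cite{ob2} directly. For the second, the key observation is that $W_q^{(1)} \cap H^{(1)} \subseteq W_{1^+}^{(1)} \cap H^{(1)}$ (since $q>1$), so the first conclusion already provides Wiener--Wintner b.a.u. convergence on every $x \in L_r(\mathcal{M},\tau)$ with weights in the smaller class $W_q^{(1)} \cap H^{(1)}$, and in particular on the dense subset $L_1(\mathcal{M},\tau) \cap \mathcal{M} \subseteq L_p(\mathcal{M},\tau) \cap L_r(\mathcal{M},\tau)$.

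First I would fix $x \in L_p(\mathcal{M},\tau)$ and $\epsilon>0$, and approximate by $x_m \in L_1 \cap \mathcal{M}$ with $\|x-x_m\|_p \leq 2^{-2m/p}$. For each $m$, the first conclusion supplies a projection $e_m \in \mathcal{P}(\mathcal{M})$ with $\tau(e_m^\perp) \leq \epsilon/2^{m+2}$ such that $(e_m M_n^\alpha(T)(x_m) e_m)_{n}$ converges in $\mathcal{M}$ for every $\alpha \in W_q^{(1)} \cap H^{(1)}$. Simultaneously, applying Theorem \ref{t31} to $x - x_m$ with a scaling parameter $\lambda_m>0$ chosen so that $\bigl(4^{2+1/p}\|x-x_m\|_p/\lambda_m\bigr)^p \leq \epsilon/2^{m+2}$ while still $\lambda_m \to 0$, I obtain $f_m \in \mathcal{P}(\mathcal{M})$ with $\tau(f_m^\perp) \leq \epsilon/2^{m+2}$ such that
$$\sup_{n \in \mathbb{N},\, \alpha \in W_q^{(1)}} \frac{1}{|\alpha|_{W_{q,1}^{(1)}}} \|f_m M_n^\alpha(T)(x-x_m) f_m\|_\infty \leq \lambda_m.$$
Setting $e := \bigwedge_{m=1}^{\infty}(e_m \wedge f_m)$ then gives $\tau(e^\perp) \leq \epsilon$.

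For a fixed $\alpha \in W_q^{(1)} \cap H^{(1)}$, I would verify that $(e M_n^\alpha(T)(x) e)_n$ is Cauchy in $\mathcal{M}$ via the splitting $M_n^\alpha(T)(x) = M_n^\alpha(T)(x_m) + M_n^\alpha(T)(x-x_m)$. Since $e \leq e_m \wedge f_m$, we have
$$\|e M_n^\alpha(T)(x) e - e M_k^\alpha(T)(x) e\|_\infty \leq \|e_m M_n^\alpha(T)(x_m) e_m - e_m M_k^\alpha(T)(x_m) e_m\|_\infty + 2 |\alpha|_{W_{q,1}^{(1)}} \lambda_m.$$
Taking $n, k \to \infty$ eliminates the first term by the Cauchy property from the first conclusion, and then $m \to \infty$ eliminates the second. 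Hence $(e M_n^\alpha(T)(x) e)_n$ converges in $\mathcal{M}$ for every $\alpha$, establishing $x \in bWW_p^T(W_q^{(1)} \cap H^{(1)})$.

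The main obstacle, and what makes the new maximal inequality essential, is producing a \emph{single} projection $e$ that serves uniformly for the uncountable family $W_q^{(1)} \cap H^{(1)}$. This is precisely what Theorem \ref{t31} enables: the weak type $(p,p)$ constant $4^{2+1/p}$ is independent of both $n$ and $\alpha$ after normalization by $|\alpha|_{W_{q,1}^{(1)}}$, so the Chebyshev-type projection $f_m$ controls the error term $M_n^\alpha(T)(x - x_m)$ in operator norm uniformly over $\alpha$ (with the factor $|\alpha|_{W_{q,1}^{(1)}}$ absorbed once $\alpha$ is fixed). Without this uniformity, the diagonal Banach-principle argument would collapse, which is exactly why the earlier approach in \cite{ob2} was limited to $W_\infty^{(1)} \cap H^{(1)}$ on $L_p$ for $p \neq r$.
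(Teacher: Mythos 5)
Your proposal is correct and follows essentially the same route as the paper: cite \cite[Theorem 4.3]{ob2} for the $L_r$ statement, observe that $W_q^{(1)}\subseteq W_{1^+}^{(1)}$ so that $L_1\cap\mathcal{M}\subseteq bWW_p^T(W_q^{(1)}\cap H^{(1)})$, and then extend to all of $L_p$ by density using the weak type $(p,p)$ maximal inequality of Theorem \ref{t31}. The only difference is that the paper delegates the final closedness step to the Banach principle of \cite[Theorem 3.1]{ob2}, whereas you unpack that step into an explicit diagonal argument with projections $e_m\wedge f_m$; both are sound.
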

\begin{proof}
The claim regarding $L_r(\mathcal{M},\tau)=bWW_r^T(W_{1^+}^{(1)}\cap H^{(1)})$ is exactly \cite[Theorem 4.3]{ob2}. From this we deduce that $L_1\cap\mathcal{M}\subseteq bWW_r^T(W_{1^+}^{(1)}\cap H^{(1)})$, and looking at the definition of $bWW_r^T(W_{1^+}^{(1)}\cap H^{(1)})$ on sees that $L_1\cap\mathcal{M}\subseteq bWW_p^T(W_q^{(1)}\cap H^{(1)})$.

For $1<p,q<\infty$ with $\frac{1}{p}+\frac{1}{q}=1$, using Corollary \ref{c31} above in \cite[Theorem 3.1]{ob2} shows that that $bWW_p^T(W_q^{(1)}\cap H^{(1)})$ is closed in $L_p(\mathcal{M},\tau)$. Since $L_1\cap\mathcal{M}$ is both dense in $L_p$ and contained in $bWW_p^T(W_q^{(1)}\cap H^{(1)})$, it follows that $bWW_p^T(W_q^{(1)}\cap H^{(1)})=L_p(\mathcal{M},\tau)$.
\end{proof}

\begin{rem}\label{r32}
It was proven by Litvinov in \cite[Theorem 5.2]{li2} that  if $\mathcal{M}$ is a von Neumann algebra with a faithful normal tracial  state $\tau$, $\Phi:L_1\to L_1$ is a normal positive ergodic homomorphism (where $\Phi$ ergodic means $\Phi(x)=x$ with $x\in L_2$ implies $x=c\textbf{1}$ for some $c\in\mathbb{C}$) such that $\tau\circ \Phi=\tau$ and $\|\Phi(x)\|_\infty\leq\|x\|_\infty$ for every $x\in\mathcal{M}$, then $L_1(\mathcal{M},\tau)=bWW_1^{\Phi}(\mathcal{T}^{(1)})$.

Using an argument similar to Theorem \ref{t33} we can generalize the Wiener-Wintner ergodic theorem of \cite{li2} to allow $q$-Besicovitch sequences and state that $L_p(\mathcal{M},\tau)=bWW_p^{\Phi}(B_q^{(1)})$ for $\Phi$ as defined above when $1<p,q<\infty$ and $\frac{1}{p}+\frac{1}{q}=1$.
\end{rem}

\smallskip

\noindent\textbf{Acknowledgements.}
The author would like to thank Dr. L\'{e}onard Cadilhac for his input and suggestions that greatly improved the results of the paper. The author would also like to express his gratitude to Dr. Semyon Litvinov for his feedback in earlier versions of this paper.

\end{document}